\newtheorem{theorem}{Theorem}[section]
\newtheorem{lemma}[theorem]{Lemma}
\theoremstyle{definition}
\newtheorem{definition}[theorem]{Definition}
\newtheorem{example}[theorem]{Example}
\newtheorem{proposition}[theorem]{Proposition}
\newtheorem{corollary}[theorem]{Corollary}
\theoremstyle{remark}
\newtheorem{remark}[theorem]{Remark}
\numberwithin{equation}{section}
  \renewcommand{\cH}{{\mathcal H}}
  \newcommand{\cK}{{\mathcal K}}
  \newcommand{\cE}{{\mathcal E}}
   \newcommand{\cN}{{\mathcal N}}
  \renewcommand{\cL}{{\mathcal L}}
  \newcommand{\cP}{{\mathcal P}}
  \newcommand{\cU}{{\mathcal U}}
   \newcommand{\cV}{{\mathcal V}}
    \newcommand{\cW}{{\mathcal W}}
  \newcommand{\cG}{{\mathcal G }}
  \newcommand{\cC}{{\mathcal C }}
\newcommand{\cT}{{\mathcal T }}
 \newcommand{\ccR}{{\mathcal R}}
\newcommand{\XX}{{\mathfrak X}}
   \newcommand{\ba}{\begin{eqnarray}}
   \newcommand{\na}{\end{eqnarray}}
   \newcommand{\ban}{\begin{eqnarray*}}
   \newcommand{\nan}{\end{eqnarray*}}
  \newcommand{\C}{{\mathbb C}}
  \newcommand{\R}{{\mathbb R}}
  \newcommand{\Z}{{\mathbb Z}}
  \newcommand{\Q}{{\mathbb Q}}
  \newcommand{\PP}{{\mathbb P}}
  \renewcommand{\a}{\alpha}
  \renewcommand{\b}{\beta}
  \renewcommand{\c}{\gamma}
  \newcommand{\<}{\langle}
  \renewcommand{\>}{\rangle}
  \newcommand{\T}{{\mathfrak T}}
    \newcommand{\disp}{\displaystyle}
  \def\cancel#1#2{\ooalign{$\hfil#1\mkern1mu/\hfil$\crcr$#1#2$}}
\def\Dirac{\mathpalette\cancel D}
\begin{document}

\title[Delocalized Chern character for stringy orbifold K-theory]{Delocalized Chern character for stringy orbifold K-theory}



  \author[Jianxun Hu] {Jianxun Hu}
  \address{Department of Mathematics\\  Sun Yat-Sen University\\
                        Guangzhou,  510275\\ China }
  \email{stsjxhu@mail.sysu.edu.cn}
 
  \author[Bai-Ling Wang]{Bai-Ling Wang}
  \address{Department of Mathematics\\
  Australian National University\\
  Canberra ACT 0200 \\
  Australia}
  \email{bai-ling.wang@anu.edu.au}

\subjclass[2010]{Primary: 57R19, 19L10, 22A22. Secondary: 55N15,53D45.}

\date{}

\dedicatory{}
\keywords{Orbifold K-theory,  Delocalized Chern character, Chen-Ruan cohomology}

\begin{abstract}
 In this paper, we define a stringy product on $K^*_{orb}(\XX) \otimes \C $, the orbifold  K-theory of any  almost complex  presentable  orbifold $\XX$. We establish   that  under this stringy product, the  delocalized Chern character  
     \[
   ch_{deloc}  :  K^*_{orb}(\XX) \otimes \C  \longrightarrow   H^*_{CR}(\XX), 
   \]
   after a canonical modification,  is a ring isomorphism.  Here $ H^*_{CR}(\XX)$ is the Chen-Ruan cohomology of $\XX$.
The  proof relies on   an   intrinsic description of  the  obstruction bundles  in the construction of the Chen-Ruan product.     As an application, we investigate this stringy product on the equivariant K-theory  
    $K^*_G(G)$ of a finite group   $G$ with the conjugation action. It turns out that the stringy product  is different from  the Pontryagin product (the latter is also called the fusion product in string theory). 

\end{abstract}

\maketitle


\section{Introduction}

The notion of  orbifold   was first introduced
by Satake under the  name  V-manifold.   There have been many very interesting 
developments since its inception.  For example, Kawasaki's orbifold index theory has been applied
extensively in the study of geometric quantizations, and in the development of orbifold string 
theory in  quantum physics.

 Recall that an orbifold $\XX$  is a paracompact Hausdorff space $X$ equipped with a  compatible system of   orbifold atlases   locally modeled on quotient spaces of   Euclidean spaces 
 by finite group actions.  
  For each $x\in X$, there is a neighbourhood $U_x$ and  a homeomorphism $U_x \cong \tilde U_x/G_x$. $\XX$ is called effective if  each  local group $G_x$ acts on   $\tilde U_x$ effectively.  For any  orbifold $\XX$, the  de Rham cohomology  $H^*_{dR}(\XX)$ is well defined, and by a theorem of Satake \cite{Sat}, is isomorphic to the singular cohomology of the underlying space 
  $X= |\XX|$.

 Associated to an effective  orbifold $\XX$,  we have  a canonical non-effective orbifold, called the inertia orbifold  $  \tilde \XX$ of $\XX$.    The inertia orbifold $\tilde \XX$ consists of connected components of different dimensions, see page 7 in \cite{CR}, 
\[
\tilde \XX = \bigsqcup_{(g)} \XX_{(g)}
\]
where $(g) \in \cT_1$, the set of equivalence classes of conjugacy classes in  local groups. 
Each $\XX_{(g)}$ is called a twisted sector of $\XX$,  and is a sub-orbifold of $\XX$.
The underlying topological space of $\tilde \XX$, denoted by $|\tilde \XX|$,  is the disjoint union of $X$ and  the singularity set  
\[
\Sigma X = \{ (x, (g) ) | x\in X, G_x \neq \{1\},  (g)  \in \text{Conj}(G_x) \},
\]
where $\text{Conj}(G_x)$ denotes the set of conjugacy classes in $G_x$.

In the development of Gromov-Witten theory for  symplectic orbifolds, Chen and Ruan  in \cite{CR} discovered a remarkable
new cohomology theory on any almost    complex orbifold $\XX$,  called the Chen-Ruan
cohomology $H^*_{CR} (\XX)$.  Almost complex orbifolds are those  orbifolds with local models  given by a finite group acting unitarily on complex spaces. 
The  Chen-Ruan
cohomology $H^*_{CR} (\XX)$, as a classical limit of  an   orbifold quantum cohomology,   is  a cohomology of  the inertia orbifold $\tilde\XX$
\[
H^*_{CR} (\XX) =  (H^*_{dR}(\tilde \XX, \C), \circ_{CR}) 
\]
with a  new product  $\circ_{CR}$,  utilising the obstruction bundles over the moduli spaces of stable pseudo-holomorphic  orbifold curves in $\XX$.    The obstruction bundle $E^{[2]}$  in the construction of the Chen-Ruan product  is a complex  orbifold vector bundle  over 
$\tilde \XX^{[2]} =\tilde \XX \times_{e} \tilde \XX$, where $e: \tilde \XX \to \XX$ is the immersion 
defined by the sub-orbifold structure on each connected component of $\tilde\XX$.
The associativity  of the Chen-Ruan product follows from a  property of the obstruction bundles discovered by
Chen-Ruan in \cite{CR} using the gluing construction in Gromov-Witten theory.



For a compact  almost complex orbifold $\XX$, Adem, Ruan and Zhang in \cite{ARZ} defined a stringy product   on $K_{orb}^*(\tilde \XX, \tau)$, the twisted   K-theory of the inertia orbifold   $\tilde\XX$ with  a transgressive  twisting $\tau$.  This product will be called the 
Adem-Ruan-Zhang product, denoted by  $\bullet_{ARZ}$.  For an orbifold  $\XX$ arising from  a smooth, projective variety  $M$ with an action of a finite group $G$  or   a Deligne-Mumford stack, an analogous product $\bullet_{JKK}$   was defined by Jarvis, Kaufmann and Kimura
in \cite{JKK} on  the untwisted  orbifold K-theory of     $\tilde\XX$.
 The ring  $(K_{orb}^*(\tilde \XX), \bullet_{JKK})$  is called the  full orbifold K-theory in \cite{JKK}.   This  full orbifold K-theory   was generalized to  any  compact  abelian quotient  orbifold  $\XX$  by Becerra and Uribe  in \cite{BecUri}. The associated stringy product will be called the Becerra-Uribe produt, denoted by $ \bullet_{BU}$. They also   established an isomorphism
\[
(K_{orb}^*(\tilde \XX), \bullet_{BU}) \cong  (K_{orb}^*(\tilde \XX), \bullet_{ARZ}).
\]
   In \cite{BecUri}, 
Becerra and Uribe also  established   a ring homomorphism between the  
orbifold K-theory of $\tilde\XX$   and the Chen-Ruan cohomology  
under  a modified  Chern character map 
\[
\widetilde{ch}:  (K^*_{orb}(\tilde \XX), \bullet_{BU})   \longrightarrow ( H_{CR}^*(\XX), \circ_{CR}).
\]
We remark  that this Chern character map  is not an isomorphism over the complex coefficients.
The full orbifold K-theory $(K_{orb}^*(\tilde \XX), \bullet_{JKK})$  was further studied by Goldin, Harada, Holm and Kimura for abelian symplectic quotients in \cite{GHHK}. They gave a complete description of the ring structure of the full orbifold K-theory of weighted
projective spaces obtained as symplectic quotients of $\C^n$ by weighted $S^1$-actions.

It is known that there is a   delocalized   Chern character (Cf.  \cite{BC2} for proper actions
   of discrete groups and \cite{BryNis} for \'etale groupoids)
   \[
    ch_{deloc}:  K^*_{orb}(\XX)  \longrightarrow H^*(\tilde \XX, \C) =  \bigoplus_{(g)}  H^*( \XX_{(g)}, \C)
    \]
    relating the orbifold K-theory  $K^*_{orb}(\XX)$  to the 2-periodic de Rham cohomology of the inertia   orbifold $\tilde\XX$. 
    In this paper, we define a stringy product on the orbifold K-theory  $K^*_{orb}(\XX)$ of an almost complex compact orbifold $\XX$.     
The main result of this paper is that after a canonical modification,  $ch_{deloc}$ is an isomorphism over the complex coefficients, and   sends the stringy product on $K^*_{orb}(\XX)$ to the Chen-Ruan  cup product on $H^*_{dR}(\tilde \XX, \C)$.

 The construction of this modified  $ch_{deloc}$ relies on an intrinsic description of 
the obstruction bundle for the Chen-Ruan product and  Adem-Ruan-Zhang's stringy product.  Associated to the orbifold  immersion 
\[
e=\bigsqcup_{(g)} e_{(g)}:  \tilde \XX=  \bigsqcup_{(g)} \XX_{(g)} \to \XX, 
\]
there is a 2-sector orbifold $\tilde\XX^{[2]} =  \tilde\XX\times_e \tilde \XX $, which consists of a disjoint union of 
 sub-orbifolds of  $\XX$
 \[
\bigsqcup_{(g_1, g_2)} \XX_{(g_1, g_2)}
\]
each of which is labelled by an equivalence  class of conjugacy pairs in local groups. Consider  the following commutative diagram 
   \[    \xymatrix{   \XX_{(g_1, g_2)} \ar[rr]^{e_1}\ar[dd]_{e_2} \ar[dr]^{e_{12} } && 
    \XX_{(g_1)}  \ar[dd]^{e_{(g_1)}} \\   &   \XX_{(g_1g_2)}   \ar[dr]^{e_{(g_1g_2)}} & \\    \XX_{(g_2)}  \ar[rr]^{e_{(g_2)}} &&  \XX.   } 
\]
Then the obstruction bundle $E^{[2]}$ is a complex orbifold vector bundle over $\tilde\XX^{[2]}$, whose component over each $\XX_{(g_1, g_2)} $ is denoted by $E^{[2]}_{(g_1, g_2)}$ (see \cite{CR} for its definition).  

  

Over the inertia orbifold $\tilde \XX$, there is an orbifold  complex vector bundle 
\[
\cN_e = \bigsqcup_{(g)}  \cN_{(g)}  \longrightarrow  \bigsqcup_{(g)}  \XX_{(g)}  
\]
given by the orbifold normal bundle   of  each $\XX_{(g)}$ in $\XX$, each of which admits an automorphism of finite order.  We can choose a  Hermitian metric  on
the tangent bundle of  $\XX$ such that  the automorphism $\Phi$ acts unitarily on $\cN_e$. Then each   $\cN_{(g)} $ has an eigen-bundle  decomposition 
  \[
  \cN_{(g)}   =    \bigoplus_{\theta_{(g)} \in \Q\cap  (0, 1) } \cN (\theta_{(g)}) 
  \]
  where $\Phi$ on $ \cN  (\theta_{(g)}) $ is multiplication by $e^{2\pi \sqrt{-1}\theta_{(g)}}$ with $\theta_{(g)} \in \Q\cap  (0, 1)$.   Define
  \ba\label{cN_g:decomp}
    \cN_{e, \Phi} =  \bigoplus_{(g)}  \sum_{\theta_{(g)}}  \theta_{(g)}   \cN  (\theta_{(g)}) ,   \qquad  \cN_{e, \Phi^{-1}} = \bigoplus_{(g)}   \sum_{\theta_{(g)}}  (1-\theta_{(g)} )\cN  (\theta_{(g)}) ,
  \na
 as  linear combinations
  of vector bundles with rational coefficients, or  as    elements in 
  \[
  K_{orb}^0(\tilde \XX) \otimes \Q  = \bigoplus_{(g)}  K_{orb}^0(\XX_{(g)}) \otimes \Q.
  \]

    \begin{theorem} \label{main:2}   Given an  almost complex orbifold $\XX$, 
   let $\cN$ be  the normal bundle  of $\tilde\XX^{[2]}$ in $\XX$. Then 
 the obstruction bundle $E^{[2]}$ in  \cite{CR}  satisfies  the 
 following identity 
 \[
 E^{[2]} \oplus  \cN = e_1^* \cN_{e, \Phi}   + e_2^* \cN_{e, \Phi}   + e_{12}^* \cN_{e, \Phi^{-1}} 
 \]
 in $K^0(\tilde \XX^{[2]})\otimes \Q = \bigoplus_{(g_1, g_2)}  K_{orb}^0(\XX_{(g_1, g_2)}) \otimes \Q $.  
   \end{theorem}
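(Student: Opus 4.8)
The plan is to reduce the identity to an elementary statement about representations of the finite group $H=\langle g_1,g_2\rangle$ by passing to local uniformizing charts, and then to prove that statement by a cohomology computation on a branched covering of $\PP^{1}$.

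First I would localize. Since $\tilde\XX^{[2]}=\bigsqcup_{(g_1,g_2)}\XX_{(g_1,g_2)}$, it suffices to prove the identity over each $\XX_{(g_1,g_2)}$, and this component is covered by charts $\tilde U^{H}/C$ with $G$ a local group acting on $\tilde U$, $H=\langle g_1,g_2\rangle\subset G$, and $C=C_{G}(g_1)\cap C_{G}(g_2)$. Over such a chart $T\XX|_{\XX_{(g_1,g_2)}}$ is the bundle $W:=T\tilde U|_{\tilde U^{H}}$ carrying a fibrewise $H$-action commuting with $C$, and every bundle in the statement is built from $W$ by an additive natural operation: $\cN=W-W^{H}$; the Chen-Ruan obstruction bundle is $E^{[2]}_{(g_1,g_2)}=\bigl(H^{1}(\tilde\Sigma,\cO_{\tilde\Sigma})\otimes W\bigr)^{H}$, where $\tilde\Sigma\to\PP^{1}$ is the connected $H$-Galois cover branched over $0,1,\infty$ with local monodromies $g_1,g_2,(g_1g_2)^{-1}$; and restricting eigenbundle decompositions along $e_1,e_2,e_{12}$ gives $e_1^{*}\cN_{e,\Phi}=\sum_{0\le\theta<1}\theta\,W_{g_1}(\theta)$, $e_2^{*}\cN_{e,\Phi}=\sum_{0\le\theta<1}\theta\,W_{g_2}(\theta)$ and $e_{12}^{*}\cN_{e,\Phi^{-1}}=\sum_{0\le\theta<1}\theta\,W_{(g_1g_2)^{-1}}(\theta)$, where $W_{g}(\theta)$ is the $e^{2\pi\sqrt{-1}\theta}$-eigenbundle of $g$ on $W$ (for the last equality use that an $e^{2\pi\sqrt{-1}\theta}$-eigenvector of $g_1g_2$ is an $e^{2\pi\sqrt{-1}(1-\theta)}$-eigenvector of $(g_1g_2)^{-1}$, and $1-\theta$ is the multiplicity weight used in $\cN_{e,\Phi^{-1}}$). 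Decomposing $W=\bigoplus_{\sigma}\sigma\otimes\underline{W^{\sigma}}$ into $H$-isotypic pieces, each of the four operations has the form $W\mapsto\sum_{\sigma}c_{\sigma}W^{\sigma}$ with $c_{\sigma}\in\Q$; hence the identity holds in $K^0_{orb}\otimes\Q$ once it holds "universally", that is, when $W$ is replaced by each irreducible $H$-representation $\sigma$.

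In that case, writing $\iota(g;\sigma)=\sum_{0\le\theta<1}\theta\dim_{\C}\bigl(e^{2\pi\sqrt{-1}\theta}\text{-eigenspace of }\sigma(g)\bigr)$ for the age, the three pullback terms evaluate to $\iota(g_1;\sigma)$, $\iota(g_2;\sigma)$, $\iota((g_1g_2)^{-1};\sigma)$, while $\cN$ contributes $\dim\sigma-\dim\sigma^{H}$ and $E^{[2]}$ contributes $\dim\bigl(H^{1}(\tilde\Sigma,\cO_{\tilde\Sigma})\otimes\sigma\bigr)^{H}$, i.e.\ the multiplicity of $\sigma^{*}$ in $H^{1}(\tilde\Sigma,\cO_{\tilde\Sigma})$. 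So the theorem reduces to
\[
\operatorname{mult}_{\sigma^{*}}H^{1}(\tilde\Sigma,\cO_{\tilde\Sigma})=\iota(g_1;\sigma)+\iota(g_2;\sigma)+\iota\bigl((g_1g_2)^{-1};\sigma\bigr)-\dim\sigma+\dim\sigma^{H}
\]
for every irreducible $H$-representation $\sigma$. To prove this I would compute $H^{1}(\tilde\Sigma,\cO_{\tilde\Sigma})$ as a virtual $H$-representation. Since $g_1,g_2$ generate $H$, $\tilde\Sigma$ is connected and $H^{0}(\tilde\Sigma,\cO_{\tilde\Sigma})$ is the trivial representation; for $h\neq 1$ the fixed set $\tilde\Sigma^{h}$ is finite, lying over $\{0,1,\infty\}$, where $h$ is a power $c_{i}^{j}$ of a local monodromy and $dh$ acts on the tangent line by the corresponding root of unity. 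The holomorphic Lefschetz fixed point theorem then gives $\operatorname{tr}(h\mid H^{1})=1-\sum_{p\in\tilde\Sigma^{h}}(1-\lambda_{p}^{-1})^{-1}$; projecting onto the $\sigma^{*}$-isotypic part by character orthogonality and using Riemann-Hurwitz for the $h=1$ term, the multiplicity becomes one contribution per branch point plus $-\dim\sigma+\dim\sigma^{H}$, and each branch-point contribution is identified with $\iota(c_{i};\sigma)$ by the classical root-of-unity identity $\frac1m\sum_{j=1}^{m-1}\frac{\zeta^{-jk}}{1-\zeta^{j}}=\frac{m-1-2k}{2m}$ for $\zeta=e^{2\pi\sqrt{-1}/m}$ and $0\le k<m$. (Equivalently, the displayed identity is the Kawasaki-Riemann-Roch formula for the flat orbifold bundle $\underline{\sigma}$ on the genus-zero orbifold $\PP^{1}$ with orbifold points of orders $m_1,m_2,m_3$, which is essentially the computation of the obstruction bundle carried out by Chen and Ruan in \cite{CR}, and one may simply invoke it.)

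I expect the genuine difficulty to lie entirely in this last step: the bookkeeping of orientations and conventions—the choice of loops around $0,1,\infty$, the identification of a local monodromy with the differential of a deck transformation at a ramification point, and the consistent distinction between age and co-age—since a single slip interchanges $g_1g_2$ with $g_2g_1$ or $\theta$ with $1-\theta$ in the final formula. A minor point to check in the localization step is that Chen-Ruan's $E^{[2]}$ and the transgression eigenbundle decomposition are natural for the chart changes of $\tilde\XX^{[2]}$, so that the local identities patch; the presentability hypothesis on $\XX$ enters only to guarantee the existence of such charts and of the relevant $K$-groups.
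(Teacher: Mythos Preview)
Your argument is correct and follows the same overall strategy as the paper: localize to a chart, reduce the bundle identity to a representation-theoretic statement about $H=\langle g_1,g_2\rangle$ and the covering curve $\tilde\Sigma\to\PP^1$, and settle that statement by an index computation on the orbifold sphere.

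The organizational difference is in which group you decompose by. You split $W$ into $H$-isotypic pieces $\sigma\otimes W^{\sigma}$, observe that the centralizer $C=Z_{G_x}(g_1)\cap Z_{G_x}(g_2)$ acts only on the $W^{\sigma}$ factor, and thereby reduce to a purely numerical identity $\operatorname{mult}_{\sigma^*}H^{1}(\tilde\Sigma,\cO)=\iota(g_1;\sigma)+\iota(g_2;\sigma)+\iota((g_1g_2)^{-1};\sigma)-\dim\sigma+\dim\sigma^{H}$ for each $H$-irreducible $\sigma$, which you prove via holomorphic Lefschetz on $\tilde\Sigma$ (equivalently Kawasaki--Riemann--Roch on the orbifold $\PP^1$). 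The paper instead decomposes the normal bundle $\cN_{(g_1,g_2)}$ into $C$-irreducibles $V_\lambda$; since $g_1,g_2,(g_1g_2)^{-1}$ commute with $C$, Schur's lemma forces each to act by a scalar on $V_\lambda$, and the \emph{$C$-equivariant} Kawasaki orbifold index (with $C$ acting trivially on the sphere) then yields the identity directly as an equality of $C$-characters, hence of bundles. The paper also isolates the intermediate facts $(H^{0}(\Sigma)\otimes\cN_{(g_1,g_2)})^{N}=0$ and $(H^{0,1}(\Sigma)\otimes T_{(g_1,g_2)}\tilde\cG^{[2]})^{N}=0$, which in your approach are absorbed into the dimension count for the trivial $H$-representation. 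Your route buys a slightly more elementary endgame (non-equivariant Lefschetz plus a root-of-unity identity); the paper's buys a one-step passage from the index formula to the $C$-equivariant statement without having to argue separately that the $C$-action factors through $W^{\sigma}$.
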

  
 Theorem \ref{main:2} implies that  the  linear combination of vector bundle with rational coefficients
 \[
 e_1^* \cN_{e, \Phi}   + e_2^* \cN_{e, \Phi}   + e_{12}^* \cN_{e, \Phi^{-1}}  - \cN, 
 \]
after combining like terms,  is  a genuine vector bundle, which can be identified with the  obstruction bundle $E^{[2]}$. This theorem was  obtained in \cite{ChenHu} for abelian orbifolds and in  \cite{JKK} for smooth Deligne-Mumford stacks.     In this paper, we give a direct  proof of this theorem using an equivariant version of Kawasaki's orbifold index theorem.
We also   employ this theorem  to give an intrinsic  definition of
Chen-Ruan cohomology as Chen and Hu did for abelian orbifolds \cite{ChenHu}.    
    
  The modified delocalized 
  Chern character is given by 
  \[
\widetilde{ch}_{deloc} = \T (\cN_e, \Phi)  \wedge   ch_{deloc}:  K^*_{orb}(\XX)   \longrightarrow    \bigoplus_{(g)}  H^*( \XX_{(g)}, \C)
  \]
where on each component $\XX_{(g)}$,  $\T(\cN_e, \Phi)$ is defined by
\[
 \prod _{\theta_{(g)}}  \T(\cN (\theta_{(g)} ))^{ \theta_{(g)}}  \in H^{* }  (\XX_{(g)} ), 
 \]
associated to  the eigen-bundle decomposition (\ref{cN_g:decomp}).   Here
$\T(V )^m$ is the multiplicative  characteristic class of an orbifold complex vector bundle $V$  corresponding  to the formal power series
$(\dfrac{1-e^x}{x})^m$ for $m \in \Q\cap (0, 1)$.

 \begin{theorem} \label{main:1}
Let $\XX$ be a compact,  almost complex, effective orbifold. Then there is an associative product  $\circ$ 
on the orbifold K-theory  $K^*_{orb}(\XX) \otimes \C$ such that the  modified delocalized  Chern character
\[
\widetilde{ch}_{deloc}:  (  K^*_{orb}(\XX) \otimes \C, \circ )    \longrightarrow  (H^*_{CR} (\XX, \C), *_{CR}), 
\]
is a ring isomorphism. 
\end{theorem}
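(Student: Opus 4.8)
\medskip

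\noindent\textbf{Proof plan.} The argument splits into four steps: (i)~construct the stringy product $\circ$; (ii)~check that $\widetilde{ch}_{deloc}$ is a $\C$-linear isomorphism; (iii)~prove that $\widetilde{ch}_{deloc}$ is multiplicative, i.e.\ carries $\circ$ to $*_{CR}$; and (iv)~deduce associativity of $\circ$ from that of $*_{CR}$. For (i), one follows the template of the Adem--Ruan--Zhang and Jarvis--Kaufmann--Kimura products: for $\alpha,\beta\in K^*_{orb}(\XX)\otimes\C$ one restricts to the double sectors $\XX_{(g_1,g_2)}$ along $e_1$ and $e_2$, multiplies by a twisting class assembled from the $K$-theoretic Euler class of the obstruction bundle $E^{[2]}_{(g_1,g_2)}$ (i.e.\ $\lambda_{-1}$ of its dual) and from the normal bundles of $e_1,e_2,e_{12}$, and then pushes forward along $e_{12}$ via the complex orientation on its virtual normal bundle. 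A priori the twisting class is only a formal quotient of Euler classes; Theorem~\ref{main:2} is exactly what guarantees that, after cancellation, it is an honest element of $K^0_{orb}(\XX_{(g_1,g_2)})\otimes\Q$, so that $\circ$ is well defined on $K^*_{orb}(\XX)\otimes\C$.

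For (ii): that $ch_{deloc}$ becomes an isomorphism after $\otimes\,\C$ is classical (\cite{BC2},\cite{BryNis}); on an orbifold chart $\tilde U/G$ it is the Atiyah--Segal-type decomposition $K^*_G(\tilde U)\otimes\C\cong\bigoplus_{(g)}\bigl(K^*(\tilde U^{g})\otimes\C\bigr)^{C(g)}\cong\bigoplus_{(g)}H^*(\tilde U^{g}/C(g);\C)$, and these local isomorphisms patch over $\XX$. The modification class $\T(\cN_e,\Phi)$ is invertible in $H^*(\tilde\XX;\C)$ because its degree-zero part on each $\XX_{(g)}$ is the nonzero scalar $\prod_{\theta_{(g)}}(-1)^{\theta_{(g)}\,\mathrm{rk}\,\cN(\theta_{(g)})}$. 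Hence $\widetilde{ch}_{deloc}=\T(\cN_e,\Phi)\wedge ch_{deloc}$ is again a $\C$-linear isomorphism onto $H^*_{dR}(\tilde\XX;\C)=H^*_{CR}(\XX;\C)$.

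Step (iii) is the substance of the theorem: the identity $\widetilde{ch}_{deloc}(\alpha\circ\beta)=\widetilde{ch}_{deloc}(\alpha)*_{CR}\widetilde{ch}_{deloc}(\beta)$. One applies $ch_{deloc}$ to the defining pull--push formula for $\circ$. The delocalized Chern character is multiplicative for the tensor product and compatible with pull--backs, but it commutes with the Gysin map $(e_{12})_!$ only up to a Todd-type correction supplied by the orbifold Grothendieck--Riemann--Roch theorem --- equivalently, by the equivariant Kawasaki index theorem already used to prove Theorem~\ref{main:2}. Concretely, pushing $ch_{deloc}$ through $(e_{12})_!$ produces the Todd class of the relative virtual tangent bundle together with the Kawasaki contributions of the eigen-bundles of the normal bundles along the inertia strata, while pushing it through a factor $\lambda_{-1}(W^{*})$ replaces that factor by $\mathrm{eu}(W)\,\Td(W)^{-1}$. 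One is thus left with a cohomological pull--push over $\tilde\XX^{[2]}$ of $e_1^{*}ch_{deloc}(\alpha)\cup e_2^{*}ch_{deloc}(\beta)$ against a product of $\mathrm{eu}(E^{[2]})$, Todd classes, and $\T$-type classes of eigen-bundles pulled back along $e_1,e_2,e_{12}$. Here Theorem~\ref{main:2} does the real work: its identity $E^{[2]}\oplus\cN=e_1^*\cN_{e,\Phi}+e_2^*\cN_{e,\Phi}+e_{12}^*\cN_{e,\Phi^{-1}}$ is precisely what recombines all of these factors, and the outcome is
\[
ch_{deloc}(\alpha\circ\beta)\;=\;\T(\cN_e,\Phi)^{-1}\cup\bigl(\widetilde{ch}_{deloc}(\alpha)\,*_{CR}\,\widetilde{ch}_{deloc}(\beta)\bigr),
\]
using the standard description (\cite{CR}) of $*_{CR}$ as a pull--push twisted by $\mathrm{eu}(E^{[2]})$, together with the projection formula. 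Multiplying by $\T(\cN_e,\Phi)$ yields the claim. The main obstacle is exactly this bookkeeping: one must verify that the rational age contributions $\theta_{(g)}$ occurring in Kawasaki's correction terms assemble into precisely the exponents built into $\T(\cN_e,\Phi)$ and into the bundles $\cN_{e,\Phi^{\pm1}}$ of Theorem~\ref{main:2} --- the matching that makes the otherwise ad~hoc modification $\T(\cN_e,\Phi)$ canonical.

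Finally, for (iv): the Chen--Ruan product $*_{CR}$ is associative (\cite{CR}, via the gluing/factorization property of the obstruction bundles), and by (ii)--(iii) the map $\widetilde{ch}_{deloc}$ is a $\C$-linear bijection intertwining $\circ$ with $*_{CR}$; hence $\circ$ is the transport of an associative product and is itself associative. Its unit is $\widetilde{ch}_{deloc}^{-1}$ of the Chen--Ruan unit $1\in H^0(\XX)$ in the untwisted sector, which equals $1\in K^0_{orb}(\XX)$ since there $\cN_e=0$ and $\widetilde{ch}_{deloc}$ reduces to the ordinary Chern character. Alternatively, associativity of the intrinsic $\circ$ can be established directly, by an argument parallel to Chen--Ruan's, from the compatibility of the obstruction bundles over the triple-sector orbifold, which likewise admits an intrinsic description through the bundles $\cN_{e,\Phi^{\pm1}}$.
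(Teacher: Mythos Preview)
Your outline captures the right ingredients for step~(iii)---Theorem~\ref{main:2} really is the engine that makes the $\T$-corrections cancel---but step~(i) has a genuine gap that the paper addresses quite differently. Your pull--push recipe, modelled on ARZ/JKK, takes $\alpha,\beta\in K^*_{orb}(\XX)\otimes\C$, restricts to $\XX_{(g_1,g_2)}$, and then pushes forward along $e_{12}$. But $e_{12}$ lands in $\XX_{(g_1g_2)}$, a sector of the \emph{inertia} orbifold, so your product takes values in $K^*_{orb}(\tilde\XX)\otimes\C$, not in $K^*_{orb}(\XX)\otimes\C$. There is no obvious K-theoretic map back: $e^*:K^*_{orb}(\XX)\otimes\C\to K^*_{orb}(\tilde\XX)\otimes\C$ is injective but not surjective, and pushing further along $e_{(g_1g_2)}$ to $\XX$ loses the sector information. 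The paper's construction (Definition~\ref{stringy:orb}) supplies exactly this missing piece: using the commutative triangle of Proposition~\ref{ch:Phi} one builds a canonical splitting $K^*_{orb}(\tilde\XX)\otimes\C\cong \mathrm{Im}(e^*)\oplus\mathrm{Ker}(ch_\Phi)$ and hence a left inverse $e_\#$ of $e^*$; then $\alpha\circ\beta:=e_\#\bigl(e^*\alpha\bullet_{ARZ}e^*\beta\bigr)$. This is an essential, not cosmetic, step, and it is also why the product exists only after $\otimes\,\C$.

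A second point concerns step~(iii). You invoke a full orbifold GRR/Kawasaki package with Todd corrections and eigen-bundle contributions along the inertia strata of $e_{12}$. The paper's computation is considerably simpler because of a fact you do not use: the canonical automorphism on the normal bundle $\cN_{e_{12}}$ of $\XX_{(g_1,g_2)}$ in $\XX_{(g_1g_2)}$ is \emph{trivial} (both sides are fixed by $g_1g_2$). Hence $ch_\Phi$ commutes with $(e_{12})_*$ up to only the ordinary defect class $\T(\cN_{e_{12}})$, with no delocalized terms. Combined with $ch(\lambda_{-1}(E^{[2]}))=e(E^{[2]})\cdot\T(E^{[2]})$ and the identity $\T\bigl(E^{[2]}_{(g_1,g_2)}\oplus\cN_{(g_1,g_2)}\bigr)\wedge e_{12}^*\T(\cN_{(g_1g_2)},\Phi)=e_1^*\T(\cN_{(g_1)},\Phi)\wedge e_2^*\T(\cN_{(g_2)},\Phi)$ (which is (\ref{key:id}), a direct consequence of Theorem~\ref{main:2}), the multiplicativity drops out in a few lines. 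Your derivation of associativity in~(iv) from that of $*_{CR}$ is fine and is how the paper implicitly handles it as well. (Minor remark: the degree-zero term of $\T(x)=(1-e^x)/x$ is $-1$, so the constant you wrote, $\prod_{\theta}(-1)^{\theta\cdot\mathrm{rk}}$, involves fractional powers of $-1$ and requires a branch choice; either way it is nonzero, so invertibility is unaffected.)
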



 In Section 2, we review some basics of orbifold, orbifold K-theory and orbifold index theorem used in this paper.  In Section 3, we give an intrinsic definition of the Chen-Ruan cohomology  after
 we  establish  Theorem \ref{main:2}.   In Section 4,  we
 define the stringy product on  orbifold K-theory of an almost complex compact orbifold, and
 prove Theorem \ref{main:1}.    We also compute a few examples in Section 4, and discover that the stringy product on
the orbifold K-theory of the orbifold  $[G/G]$ (obtained from the conjugation of a finite group on itself)  is different from  the Pontrjagin product on $K_G(G)$. In Section 5, we briefly discuss the stringy product on twisted orbifold K-theory for orbifolds with torsion twistings.      
    
    \section{Review of Orbifolds and Orbifold Index Theory}
 In this section, we will give a brief review of the notion of orbifolds in terms of orbifold atlas and proper
 \'etale groupoids. Then we review  the orbifold index theory using delocalized Chern character. 
 The main references for this section are \cite{ALR},    \cite{Kaw},  \cite{Moe1},  \cite{MoePr} and \cite{Thur}.

 \subsection{Orbifolds and orbifold  groupoids } \ 

  Let $X$ be a  paracompact Hausdorff space.  
An $n$-dimensional orbifold chart for an open subset $U$ of $X$ is a triple $(\tilde U, G, \pi)$ given by a connected open subset  $\tilde U \subset \R^n$,
together  with a smooth   action of a finite group $G$ such that   $\pi:  \tilde U\to    U  $ is the induced quotient map.  An orbifold chart $(\tilde U, G, \pi)$ is called effective if the action of $G$ on $\tilde U$ is effective.  Given   an inclusion  $\iota_{ij}: U_i \hookrightarrow  U_j$, 
an embedding  of orbifold charts
 \[
 (\phi_{ij}, \lambda_{ij}):   (\tilde U_i, G_i, \pi_i)  \hookrightarrow   (\tilde U_j, G_j, \pi_j) 
 \]
 consists of  an injective  group 
 homomorphism  $\lambda_{ij}:  G_i \to G_j$, and an embedding 
$
 \phi_{ij}:    \tilde U_i   \hookrightarrow    \tilde U_j 
$ covering the inclusion $\iota_{ij}$  such that $\phi_{ij}$ is $G_i$-equivariant with respect to  $\phi_{ij}$, that is, 
 \[
 \phi_{ij}(  gx) =       \lambda_{ij}( g)  \phi_{ij}(x ) ,
 \]
 for $x\in \tilde U_i$ and $g\in G_i$.  In the noneffective case, we further require that
the subgroup of $G_i$ acting trivially on $U_i$ is isomomorphically mapped  to the subgroup
of $G_j$ acting trivially on $U_j$. 
  Whenever $U_i\subset U_j \subset U_k$, there exists an element
$g\in G_k$ such that 
\[
g \circ \phi_{ik} =  \phi_{jk} \circ  \phi_{ij}, \qquad 
g \lambda_{ik} g^{-1} = \lambda_{jk}\circ \lambda_{ij}.
\]

\begin{definition}   An {\em orbifold  atlas}  on  $X$  is    a collection of   orbifold charts   $\cU =\{(\tilde U_i, G_i, \pi_i)\}$  for an open covering  $\{U_i\} $  of $X$ such that
\begin{enumerate}
\item  $\{ U_i\}$ is closed under finite intersection.
\item  Given any inclusion $U_i \subset  U_j$, there is an embedding of orbifold charts
$(\phi_{ij}, \lambda_{ij}):   (\tilde U_i, G_i, \pi_i)  \hookrightarrow   (\tilde U_j, G_j, \pi_j)$.
 \end{enumerate}
Two  orbifold  atlases $\cU$ and $\cV$ are equivalent if there is a common orbifold atlas $\cW$  refining  $\cU$ and  $\cV$. 
An orbifold $\XX = (X, \cU)$   is a paracompact Hausdorff space $X$  with an equivalence class of  orbifold  atlases.   Given an orbifold  $\XX = (X, \cU)$ and a point $x\in X$, let $(\tilde U, G, \pi)$ be an orbifold chart around $x$.  Then the local group at $x$ is defined to   be the  
stabilizer of  $\tilde x \in \pi^{-1}(x)$, which is uniquely defined up to conjugation.
\end{definition}

The notion of an  orbifold and many of its invariants can be reformulated using the language of groupoids.   For general orbifolds, the groupoid viewpoint is also essential for  the  $C^*$-algebraic definition of  K-theory and its twisted version in order to get a cohomology theory satisfying the Mayer-Vietoris  axiom.  We briefly recall the definition of groupoids  and their roles in the orbifold theory.

 A Lie groupoid
 $\cG = (\cG_1 \rightrightarrows \cG_0)$  consists of two smooth manifolds $\cG_1$ and $\cG_0$, together with five smooth maps $(s, t, m, u, i)$ satisfying the following properties. 
  \begin{enumerate}
\item  The source map  and the target map $s, t: \cG_1 \to  \cG_0$ are submersions.
\item The composition map
\[
m:  \cG_1 {\ _t}\times_{s \ }\cG_1 =\{(g_1, g_2) \in  \cG_1 \times  \cG_1: t(g_1) = s(g_2)\} \to \cG_1
\]
written as $m(g_1, g_2) = g_1\cdot g_2$, 
satisfies the obvious associative property.
\item The unit map $u: \cG_0 \to \cG_1$ is a two-sided unit for the composition.
\item The inverse map $i: \cG_1 \to \cG_1$, $i(g) = g^{-1}$,  is a two-sided inverse for the composition. 
\end{enumerate}
A Lie groupoid $\cG = (\cG_1 \rightrightarrows \cG_0)$   is proper if $(s, t): \cG_1 \to \cG_0 \times \cG_0$ is proper, and called \'etale if $s$ and $t$ are local diffeomorphisms.

  Let $\cG_1 \rightrightarrows \cG_0$ and 
$\cH_1  \rightrightarrows \cH_0$ be two   Lie  groupoids.  A generalized morphism 
between 
$ \cG$ and  $ \cH  $
  is   a right
principal $\cH$-bundle $P_f$  over $\cG_0$ which is also a left $\cG$-bundle over $\cH_0$ such that the left $\cG$-action and the right $\cH$-action commute,  formally denoted by
\ba\label{H-S}
\xymatrix{
\cG_1 \ar@<.5ex>[d]\ar@<-.5ex>[d]&P_f \ar@{->>}[ld] \ar[rd]&
\cH_1 \ar@<.5ex>[d]\ar@<-.5ex>[d]\\
\cG_0&&\cH_0
}
\na
  For example,  a rank $k$ Hermitian  vector bundle over a   Lie  groupoid   $\cG $ is defined by a $U(k)$-valued cocycle over $\cG$, that is, a
generalized morphism  $ \cG \to U(k)$ where $U(k)$ is viewed as a Lie groupoid with one object. 
Note that generalized morphisms can be composed. This implies that  the pull-back of a  vector  
bundle  over a groupoid by any generalized morphism is well-defined.  Note that a 
generalized morphism $f$
between 
$ \cG$ and  $ \cH  $ is invertible if $P_f$ in (\ref{H-S}) is also a principal $\cG$-bundle
over $\cH_0$. Then $ \cG$ and  $ \cH  $ are called Morita equivalent.

\begin{remark} \begin{enumerate}
\item As observed in \cite{MoePr2} and \cite{LU},  given an orbifold  $\XX = (X, \cU) $, there is a canonical proper \'etale Lie groupoid $\cG[\cU] $, 
locally given by the action groupoid $\tilde U_i \rtimes G_i\rightrightarrows \tilde U_i $.    For  two equivalent  orbifold atlases $\cU$ and $\cV$,      $\cG[\cU] $ and $  \cG[\cV] $ are Morita equivalent.

\item Given a proper \'etale Lie groupoid $\cG$, there is  a canonical orbifold structure on its orbit space 
$|\cG|$, see \cite{MoePr} and Proposition 1.44 in \cite{ALR}. Two Morita equivalent 
proper \'etale Lie groupoids define the same orbifold up to isomorphism (Theorem 1.45 in \cite{ALR}).  
\item  Given  an orbifold $\XX = (X, \cU)$, a proper \'etale Lie groupoid $\cG$ is called a presentation
of $\XX$ if  there is a homeomorphism $f: |\cG| \to X$ such that  $f^*\cU$ agrees with the canonical orbifold structure on $|\cG|$. A proper \'etale Lie groupoid  is also called an orbifold groupoid for simplicity.

\item An orbifold is called  effective  if  any local group acts  effectively on its orbifold chart.  For an $n$-dimensional effective orbifold $\XX = (X, \cU)$, the corresponding proper \'etale Lie groupoid  is Morita equivalent to the action groupoid associated to the $O(n)$-action on the orthonormal frame bundle for  a  Riemannian
metric  on $\XX$.

 \end{enumerate}
\end{remark}

 \begin{definition}  \label{def}  Let $\XX$ be an  orbifold with a  presenting  groupoid $\cG$.    If $|\cG|$ is compact,   the de Rham cohomology     of an   orbifold $\XX = (X, \cU)$,  denoted by 
   $H^*_{orb}(\XX, \R)$, 
 is defined to be the 
   de Rham cohomology   of   $\cG$  which is the cohomology of the $\cG$-invariant de Rham complex 
   $(\Omega^p(\cG), d)$, where 
   \[
   \Omega^p(\cG) =  \{\omega \in \Omega^p (\cG_0) | s^* \omega = t^*\omega\}.
   \]
   If  $|\cG|$ is not compact,    the de Rham cohomology  of $\XX$   is defined to be the de Rham cohomology   with compact supports of  $\cG$, where a  differential form $\omega \in  \Omega^p(\cG) $ has  compact support in $ |\cG|$.
\end{definition}

An orbifold vector bundle $\cE$  over an orbifold $\XX = (X, \cU)   $ is a family  of 
equivariant vector bundles 
\[
\{ (\tilde E_i \to \tilde U_i, G_i )  \}
\]
  such  that 
for any embedding of orbifold charts $\phi_{ij}:   (\tilde U_i, G_i)  \hookrightarrow   (\tilde U_j, G_j)$,  there is   a  $G_i$-equivariant  bundle map   $\tilde \phi_{ij}: \tilde E_i \to \tilde E_j$ covering 
$ \phi_{ij}: \tilde U_i \to \tilde U_j$.   The total space $E=  \bigcup (\tilde E_i/G_i)$ of an orbibundle $\cE \to \XX$  has a canonical orbifold structure  given by $\{(\tilde E_i, G_i)\}$.  
A connection $\nabla$ on an orbibundle $\cE\to \XX$ is a family of invariant connections $\{ \nabla_i\}$ 
 on
$
\{ ( \tilde E_i \to \tilde U_i, G_i)  \}
$
which  are compatible with the  bundle maps $\{\tilde \phi_{ij}\}$. 
 Examples of orbibundles over $\XX$ include its tangent bundle $T\XX$ and its cotangent bundle $T^*\XX$.
In general, orbifold vector bundles over $\XX$ do not define vector bundles over the underlying topological space $X$. 

Recall that a  vector bundle over a Lie groupoid $\cG = (\cG_1 \rightrightarrows \cG_0)$ is a $\cG$-vector bundle $E$  over $\cG_0$, that is, a  vector bundle $E$  with a fiberwise linear action of $\cG$   covering the canonical  action of $\cG$ on $\cG_0$. 
 One can check that an orbibundle
 $\cE = (E, \cU_E)$ defines a canonical vector bundle over the groupoid $\cG[\cU]$.    

  \begin{definition}  \label{def2}  Let $\XX$ be an  orbifold with a  presenting  groupoid $\cG$.   
 If $|\cG|$ is compact,   the orbifold K-theory of $\XX$, denoted by $K^0_{orb}(\XX)$,   is defined to be the Grothendieck ring of isomorphism classes of vector bundles over $\cG$.   If  $|\cG|$ is not compact,    the orbifold K-theory of $\XX$ is defined to be the Grothendieck ring of isomorphism classes of complex  vector bundles  with compact supports over $\cG$. Here a vector bundle   with compact support over $\cG$ is a $\Z_2$-graded  $\cG$-equivariant complex  vector bundle $E = E^0\oplus E^1$  with a $\cG$-equivariant bundle morphism $\sigma:
   E_0 \to  E_1$ such that  the support of $\sigma$
   \[
   \{x\in \cG_0 |   \sigma_x:  E^0_x \to E^1_x \text{ is not an isomorphism}\}
   \]
 defines  a compact set in $|\cG|$. 
\end{definition}

The   de Rham cohomology  and the orbifold K-theory are  well-defined,  as a Morita equivalence between 
two groupoids induces an   isomorphism on de Rham cohomology and orbifold K-theory respectively. The Satake-de Rham theorem for  an orbifold  $\XX = (X, \cU)$ leads to an isomorphism
\[
H^*_{orb}(\XX, \R) \cong H^*(X, \R)
\]
between the  de Rham cohomology and the singular cohomology of the underlying topological space. 
The standard Chern-Weil construction applied to a compact orbifold $\XX$ gives rise to the Chern character map 
\[
ch:  K_{orb}^0(\XX) \longrightarrow H^{ev}_{orb}(\XX, \C),
\]
which is  a ring homomorphism. 
Here the ring structure on $K_{orb}^0(\XX)$ is given  by the tensor product of vector bundles 
and  the ring structure on $H^{ev}_{orb}(\XX, \C)$ is given by the wedge product of differential forms. This Chern character over the complex coefficients  is  not  an isomorphism. 

\begin{remark} \label{C^*}
The definition of orbifold K-theory  can be extended in the usual way to a  $\Z_2$-graded cohomology theory, see \cite{ALR}, 
\[
K^*_{orb}(\XX) =  K_{orb}^0(\XX) \oplus  K_{orb}^1(\XX). 
\]
For an orbifold   as  a quotient of  a  compact Lie group  action  on locally compact manifolds with finite stabilizers,  the orbifold K-theory has  the usual Bott
periodicity, the Mayer-Vietoris exact sequence  and the Thom isomorphism  for  orbifold
 $Spin^c$ vector bundles.  

We can define the orbifold K-theory of an orbifold  $\XX$ as the K-theory of the reduced $C^*$-algebra   $C^*_{red}(\XX)$  of the   canonical proper \'etale groupoid (see Chapter 2 in \cite{Con}).  The    $C^*$-algebraic  orbifold  K-theory      is  a module over  the orbifold  K-theory  using orbifold vector bundles.
 If $\XX$ is presentable,  that is, $\XX$ is the orbifold obtained from a locally free action of a compact Lie group $G$ on a smooth manifold $M$, then $C^*(\XX)$ is Morita equivalent to  the cross-product $C^*$-algebra 
$C(M)\rtimes G$.  This Morita equivalence can be used to define   an   isomorphism  between
 the orbifold K-theory $K_{orb}^*(\XX)$ defined in Definition \ref{def}  and  the K-theory of the reduced $C^*$-algebra   $C^*_{red}(\XX)$  
\[
K_{orb}^*(\XX) \cong K^*_G(M) \cong K_*(C(M)\rtimes G) \cong K_*(C^*_{red}(\XX)).
\]
      Conjecturally, the isomorphism 
 \[
 K_{orb}^*(\XX) \cong K_*(C^*_{red}(\XX))
 \]
 holds  for general orbifolds. 
  \end{remark}

\subsection{Delocalized Chern character and the orbifold index theory} 

For any  compact orbifold $\XX$, there is a  delocalized Chern character, 
\[
ch_{deloc}: K^*_{orb}(\XX) \longrightarrow H^{*} (\tilde \XX, \C)
\]
from the orbifold K-theory of $\XX$ to the 2-periodic   de Rham cohomology of its inertia orbifold $\tilde\XX$. It was defined in \cite{BC2} for proper actions of discrete groups and in \cite{BryNis} for \'etale groupoids.  The
delocalized Chern character made its first   appearance  in the Lefschetz formulas of Atiyah-Bott in \cite{AB}
and the Kawasaki orbifold index theorem  in \cite{Kaw}.   

  Let $\XX  = (X, \cU) $ be an orbifold. Then the set of pairs
\[
\tilde X =\{ (x, (g)_{G_x})|  x\in X, g\in G_x\},
\]
where $(g)_{G_x}$ is the conjugacy class of $g$ in the local group $G_x$, has a natural 
orbifold structure given by
\[
\{ \bigl(\tilde U^{g}, Z_G(g),  \tilde\pi,  \tilde U^{g}/C(g) \big) |  g\in G\}. 
\]
Here for each orbifold chart $(\tilde U,  G,  \pi, U) \in \cU$,  $Z_G(g)$ is the centralizer of $g$ in $G$ and 
$\tilde U^{g}$ is the fixed-point set of $g$ in $\tilde U$.  This orbifold, denoted by $\tilde \XX$, is called the inertia orbifold of $\XX$. The inertia orbifold $\tilde \XX$  consists of a  disjoint union of sub-orbifolds of $\XX$. 

To describe the connected components of $\tilde \XX$, we need to introduce an equivalence relation on the
set of conjugacy classes in local groups as in \cite{CR}.  For each $x\in X$, let 
$(\tilde U_x,  G_x,  \pi_x, U_x)$ be a local orbifold chart at $x$. If $y\in U_x$, then up to conjugation, there is an injective homomorphism of local groups $G_y\to G_x$, hence 
the conjugacy class $(g)_{G_x}$ is well-defined for $g\in G_y$. We define the equivalence to be generated
by the relation $(g)_{G_y}\sim (g)_{G_x}$. Let $\cT_1$ be the set of equivalence classes. Then 
\[
\tilde \XX = \bigsqcup_{(g) \in \cT_1} \XX_{(g)},
\]
where $\XX_{(g)} =\{(x, (g')_{G_x}| g'\in G_x, (g')_{G_x} \sim  (g)\}$.
 Note that
$\XX_{(1)} =\XX$ is called the non-twisted sector and  $\XX_{(g)}$ for $g\neq 1$ is called a twisted sector
of $\XX$.

Let $\cG$ be a proper \'etale Lie groupoid  representing a  compact orbifold 
$\XX =(X, \cU)$.  Then   the  groupoid    associated to the inertia orbifold $\tilde \XX$ is given by
 $\tilde \cG = ( (s, t): \tilde \cG_1 \rightrightarrows \tilde \cG_0)$, where   
\[  \tilde\cG_0 = \{g\in \cG_1| s(g) =t(g)\} ,  \qquad 
  \tilde\cG_1 = \{ (g,  h) \in \cG_1 \times \cG_1  | g  \in\tilde  \cG_0,  t(g) = s(h)\},  \]  with the source map $s(g,    h) =g $ and the target map $t(g , h) = h^{-1} g  h  $.  There is an obvious evaluation map $e: \tilde \cG \to \cG$ which corresponds to the obvious orbifold immersion 
\[
e=\bigsqcup e_{(g)}: \tilde \XX = \bigsqcup_{(g) \in \cT_1} \XX_{(g)} \to \XX.
\]

  Given a complex orbifold bundle $E$ over $\XX$, or a complex vector bundle over its presenting groupoid
$\cG$,   the pull-back bundle $e^*E$ over $\tilde \XX$ or $\tilde \cG$ has a canonical  automorphism $\Phi$.
With respect to a $\cG$-invariant Hermitian metric on $E$,  there is an eigen-bundle decomposition
of $e^*E$
\[
e^* E = \bigoplus_{\theta \in \Q\cap [0, 1) } E_\theta
\]
where  $E_\theta$ is a complex vector bundle over $\tilde \cG$,  on which $\Phi$ acts  by multiplying $e^{2\pi \sqrt{-1} \theta}$.   Define
\[
ch_{deloc} (E) = \sum_\theta e^{2\pi \sqrt{-1} \theta} ch(E_\theta) 
\in   H^{ev} (\tilde \cG, \C)\cong H^{ev}_{orb}(\tilde \XX, \C),
 \]
 where $ch(E_\theta) \in  H^{ev} (\tilde \cG, \C)$ is the ordinary Chern character of  $E_\theta$.

 The odd delocalized Chern character
 \[
 ch_{deloc}: K^1_{orb}(\XX) \longrightarrow H^{odd} (\tilde \XX, \C)
\]
can be defined in the usual way.  Using the standard compactly supported condition,
the delocalized Chern character can be defined for non-compact orbifolds.

 \begin{remark} For a compact quotient orbifold $\XX=[M/G]$ where $G$ is a compact Lie group  acting  on a compact manifold $M$ with finite stablizers, Adem and Ruan obtained a decomposition (Cf. Theorem
 5.1 and Corollary 5.2 in \cite{AR})
 \[
 K^*_{orb}(\XX)\otimes  \C \cong  K^*_G(M) \otimes \C \cong \bigoplus_{\{\text{conjugacy class}\  \<g\> : g\in G\}}
 K^*(M^g/Z_G(g)) \otimes \C,
 \]
 where the closed submanifold  $M^g$ is the fixed  point set of the $g$-action, and $Z_G(g)$ is
 the centralizer of $g$ in $G$. 
 Applying the ordinary Chern character on each $K^*(M^g/Z_G(g))$, we get an
alternative definition of  the delocalized  Chern character over $\C$:
\ba\label{AR:deloc}
ch_{deloc}:   K^*_G(M) \otimes  \C \longrightarrow  \bigoplus_{\{  \<g\> : g\in G\}}
 H^*(M^g/Z_G(g),  \C).
 \na
 Going through the proof of Theorem 5.1 in \cite{AR}, particularly  the ring map from the representation ring of the cyclic subgroup $\<g\>$ generated by $g$ to the the cyclotomic field $\Q (e^{2\pi\sqrt{-1}/|(g)|})$, one  can check that the delocalized  Chern character (\ref{AR:deloc}) agrees with the delocalized Chern
 character defined by eigen-bundle decompositions. Here we identify the inertia orbifold $\tilde\XX$ with
 the orbifold $\bigsqcup_{\<g\>} [M^g/Z_G(g)]$. 
 \end{remark}

  \begin{proposition}  \label{deloc:iso}
  For any compact presentable  orbifold $\XX$, the delocalized Chern character 
  gives a ring isomorphism 
  \[
  ch_{deloc}: K^*_{orb}(\XX) \otimes_\Z \C  \longrightarrow H^{*} (\tilde \XX, \C)
  \]
  over $\C$. 
  \end{proposition}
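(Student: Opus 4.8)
The plan is to reduce to a global quotient and then combine the Adem--Ruan fixed-point decomposition with the classical fact that the rational Chern character is an isomorphism on finite complexes. Since $\XX$ is presentable, I would write $\XX=[M/G]$ for a locally free smooth action of a compact Lie group $G$ on a manifold $M$ (Remark~\ref{C^*}); as $|\XX|$ and $G$ are compact the orbit projection $M\to M/G$ is proper, so $M$ is compact, and local freeness makes every stabilizer finite. Remark~\ref{C^*} gives $K^*_{orb}(\XX)\otimes\C\cong K^*_G(M)\otimes\C$, and pulling the inertia construction back to $M$ identifies
\[
\tilde\XX\ \cong\ \bigsqcup_{\<g\>}\,[M^g/Z_G(g)],
\]
the disjoint union over conjugacy classes of elements $g\in G$ with $M^g\neq\emptyset$; each such $g$ lies in a stabilizer, hence has finite order, and there are only finitely many such classes since a compact group acting on a compact manifold has finitely many orbit types. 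By Satake's theorem \cite{Sat}, $H^*(\tilde\XX,\C)\cong\bigoplus_{\<g\>}H^*(M^g/Z_G(g),\C)$, the $\Z_2$-graded cohomology of the compact quotient spaces. It then suffices to prove that, under these identifications, $ch_{deloc}$ is a complex-linear isomorphism and a ring map.

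First I would invoke Theorem~5.1 and Corollary~5.2 of \cite{AR}, giving a decomposition $K^*_G(M)\otimes\C\cong\bigoplus_{\<g\>}K^*(M^g/Z_G(g))\otimes\C$. As recalled in the remark just before this Proposition (cf.\ equation~(\ref{AR:deloc})), matching this decomposition with the eigenbundle definition of $ch_{deloc}$ comes down to the ring map from the complexified representation ring of the cyclic subgroup $\<g\>$ to the cyclotomic field $\Q\big(e^{2\pi\sqrt{-1}/|g|}\big)$ sending the tautological character to $e^{2\pi\sqrt{-1}/|g|}$; tracing this through shows that, on the summand indexed by $\<g\>$, the map $ch_{deloc}$ coincides with the ordinary Chern character
\[
ch\colon K^*\!\big(M^g/Z_G(g)\big)\otimes\C\ \longrightarrow\ H^*\!\big(M^g/Z_G(g),\C\big).
\]

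Next I would observe that each $M^g$ is a closed submanifold of $M$, so $M^g/Z_G(g)$ is a compact quotient of a compact manifold by a smooth compact Lie group action and therefore has the homotopy type of a finite CW complex (for instance via an equivariant triangulation of $M$); by the Atiyah--Hirzebruch theorem the Chern character $ch\colon K^*(M^g/Z_G(g))\otimes\C\to H^*(M^g/Z_G(g),\C)$ is an isomorphism, and summing over the finitely many contributing classes shows $ch_{deloc}$ is a linear isomorphism. To see it is a ring homomorphism for the tensor product on $K$-theory and the wedge product on $H^*(\tilde\XX,\C)$, note that over $\tilde\cG$ one has $e^*(E\otimes F)=e^*E\otimes e^*F$ with $\Phi_{E\otimes F}=\Phi_E\otimes\Phi_F$, so $(E\otimes F)_\theta=\bigoplus_{\theta_1+\theta_2\equiv\theta\,(\mathrm{mod}\,1)}E_{\theta_1}\otimes F_{\theta_2}$, whence
\[
ch_{deloc}(E\otimes F)=\sum_\theta e^{2\pi\sqrt{-1}\theta}\,ch\big((E\otimes F)_\theta\big)=ch_{deloc}(E)\cdot ch_{deloc}(F),
\]
using $e^{2\pi\sqrt{-1}\theta}=e^{2\pi\sqrt{-1}\theta_1}e^{2\pi\sqrt{-1}\theta_2}$ when $\theta\equiv\theta_1+\theta_2$ and multiplicativity of the ordinary Chern character; the odd case and the behaviour on the unit are immediate.

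The step I expect to require the most care is the compatibility used in the second paragraph: making precise that the Adem--Ruan fixed-point decomposition matches the automorphism/eigenbundle description of $ch_{deloc}$ used here, i.e.\ that twisting $ch$ of the $\theta$-eigenbundle by $e^{2\pi\sqrt{-1}\theta}$ is exactly what turns the localized representation-ring data on a twisted sector into the honest Chern character on the fixed-point quotient; this is the assertion sketched in the remark above, and pinning it down amounts to tracking that decomposition through the proof of \cite[Theorem~5.1]{AR}. A self-contained alternative avoids \cite{AR} altogether: both $K^*_{orb}(\,\cdot\,)\otimes\C$ and the functor sending an orbifold to the complex cohomology of its inertia orbifold satisfy Mayer--Vietoris for invariant open covers, $ch_{deloc}$ is a natural transformation commuting with the connecting maps, and on a local model $[\tilde U/\Gamma]$ with $\Gamma$ finite and $\tilde U$ equivariantly contractible it restricts to the character isomorphism $R(\Gamma)\otimes\C\xrightarrow{\ \sim\ }\{\text{class functions on }\Gamma\}$; a finite good cover together with the five lemma then completes the proof.
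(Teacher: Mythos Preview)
Your proposal is correct. Your argument for the ring-homomorphism property via the eigen-bundle decomposition of $e^*(E\otimes F)$ is exactly the paper's. For the vector-space isomorphism, however, your primary route is different: you invoke the global Adem--Ruan decomposition $K^*_G(M)\otimes\C\cong\bigoplus_{\<g\>}K^*(M^g/Z_G(g))\otimes\C$ and then apply the ordinary Chern character isomorphism on each compact quotient, whereas the paper proceeds locally---restricting to orbifold charts $[\tilde U_i/G_i]$ with $G_i$ finite, citing Baum--Connes \cite[Theorem~1.19]{BC2} for the local isomorphism, and then running a Mayer--Vietoris/five-lemma induction over a cover. Your ``self-contained alternative'' in the final paragraph is precisely the paper's proof. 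Your main approach is shorter if one is willing to import \cite[Theorem~5.1]{AR} and the compatibility observation in the remark preceding the proposition (which the paper itself flags as requiring one to trace through that proof); the paper's approach is more self-contained and does not need compactness of the global quotient at intermediate steps, only compact supports on charts.
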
 
  
  \begin{proof} The proof follows from the isomorphism  for orbifolds obtained from
  a finite group action on a locally compact manifold  and the  Mayer-Vietoris sequence  for open covers.  Recall that the canonical
  groupoid $\cG$ associated to an orbifold chart $\{(\tilde U_i, G_i, \pi_i)\}$, when restricted to
  each open set $\tilde U_i$,  is an action groupoid $\tilde U_i\rtimes G_i$. With compactly supported
  K-theory and de Rham cohomology, one has the following isomorphism (Theorem 1.19
  in \cite{BC2})  of vector spaces over $\C$
  \[
  ch_{deloc}: K^*_{G_i} (\tilde U_i  ) \otimes_\Z \C  \longrightarrow H^{*}_{c}(\tilde U_i,  G_i)
  \] 
  where $  H^{*}_{c}(\tilde U_i,  G_i) =\bigl[  \oplus_{g\in G_i} H_c^*(\tilde U_i^g, \C)\bigr]^{G_i}$
  with $\tilde U_i^g$  the fixed point submanifold  of the $g$-action.  From the definition, we see that $H^{*}_{c}(\tilde U_i,  G_i)$ is the de Rham cohomology of the inertia groupoid of 
  the action groupoid $\tilde U_i\rtimes G_i$.  By an induction argument, we can apply the 
  Mayer-Vietoris sequence for open covers and the five lemma to show that 
  \[
  ch_{deloc}: K^*_{orb}(\XX) \otimes_\Z \C  \longrightarrow H^{*} (\tilde \XX, \C)
  \]
  is an  isomorphism of vector spaces.  For two vector bundles $E$ and $F$ with eigen-bundle
  decompositions
  \[
  e^* E = \bigoplus_{\theta } E_\theta \qquad  e^* F = \bigoplus_{\zeta  } F_\zeta,
\]
we have
\[
e^*(E\otimes F) = \bigoplus_{\theta, \zeta}    E_\theta \otimes  F_\zeta,
\]
from which   one immediately gets 
\[
ch_{deloc} (E\otimes F)  = ch_{deloc} (E)\cup ch_{deloc} (F).
\]
Hence,  $ch_{deloc}: K^0_{orb}(\XX) \otimes_\Z \C  \longrightarrow H^{ev}_{orb}(\tilde \XX, \C)  $ is a  ring isomorphism.  The ring isomorphism for the odd delocalized Chern character can be proved by the standard de-suspension argument.
  \end{proof}
  
  \begin{remark}\label{equ:Kar}  The delocalized Chern character can be applied to write the Kawasaki orbifold index 
  \cite{Kaw} as follows. Let $\XX$ be a compact  almost complex orbifold with a Hermitian connection on  the
tangent bundle $T\tilde \XX$ of   the inertia orbifold $\tilde\XX$.  Let $E$ be a complex orbifold Hermitian vector bundle with a Hermitian
  connection. Let $\Dirac^\pm_E$ be the corresponding $Spin^c$ Dirac operator.   The orbifold index formula in \cite{Kaw} can be expressed as
  \[
  Index(\Dirac^+_E) = \int_{\tilde\XX}^{orb} ch_{deloc}(E) Td_{deloc}(\XX) 
  \]
  where $Td_{deloc}(\tilde\XX)  \in H^*(\tilde \XX)$  is the delocalized Todd class of   $\XX$, whose  $\XX_{(g)}$-component is given by
  \[
 \dfrac{ Td (\XX_{(g)}) }{det \big(1-g^{-1} e^{F_{\cN_{(g)}}/2\pi i}  \big)}.
 \]
 Here $Td (\XX_{(g)})$ is the Todd form of $\XX_{(g)}$ and $F_{\cN_{(g)}}$ is the 
 curvature of the induced Hermitian connection on the normal bundle $\cN_{(g)}$.  In this paper, we will use the equivariant orbifold index formula for a finite group $H$ acting trivially on $\XX$.  Here we briefly discuss this version of the  equivariant  orbifold index formula. When $H$ acts trivially on $\XX$, we have 
 \ba\label{iso:H}
 K_{orb, H}^0(\XX) \cong K_{orb}^0(\XX) \otimes  R(H)
 \na
 where $R(H)$ is the representation ring of $H$. Composing this isomorphism with the delocalized 
 Chern character, we get 
 \[
 ch_{deloc, H}:  K_{orb, H}^0(\XX) \longrightarrow H^{ev} (\tilde \XX, \C) \otimes R(H).
 \]
 Then the equivariant orbifold index formula can be written as 
 \[
 Index_H(\Dirac^+) = \int_{\tilde\XX}^{orb} ch_{deloc, H}(E) Td_{deloc}(\XX).
  \]
  In particular, for each $h\in H$ and $E= \oplus_i E_i \otimes V_i$ under the isomorphism (\ref{iso:H}), we have
  \[
  Tr(h|_{Ker \Dirac^+}) -  Tr(h|_{Ker \Dirac^-}) =  \sum_i  Index(\Dirac^+_{E_i}) Tr(g|_{V_i}).
  \]
     \end{remark}

\section{Intrinsic description of Chen-Ruan cohomology}  

For an  almost complex  abelian orbifold $\XX$, Chen and Hu gave a classical definition of the
Chen-Ruan product using an intrinsic definition of Chen-Ruan's obstruction bundle, see
the proof of Proposition 1 in section 3.4 of \cite{ChenHu}.  For a smooth Deligne-Mumford stack, a similar description of  Chen-Ruan's obstruction bundle was obtained by Jarvis, Kaufmann and Kimura in \cite{JKK}.    In this section, we give an 
intrinsic definition of Chen-Ruan's obstruction bundle for  an almost complex orbifold  $\XX$ with associated 
 Lie groupoid $\cG$. 

 Let $\tilde\XX  =\bigsqcup_{(g) \in \cT_1} \XX_{(g)}$ be the inertia orbifold with  associated inertia groupoid   $\tilde\cG$ and 
 the evaluation map $e: \tilde \cG\to \cG$.   
The  $k$-sector  $\tilde\XX^{[k]}$ of $\XX$ is defined to be the orbifold  on the
set of all pairs 
\[
  (x, (g_1, \cdots, g_k)_{G_x}),
  \]
where $(g_1, \cdots, g_k)_{G_x}$ denotes the conjugacy  class of k-tuples.  Here two 
k-tuples $(g_1^{(i)}, \cdots, g_k^{(i)})_{G_x}$, $i=1, 2$, are conjugate if there is a $g\in G_x$ such that 
$g_j^{(2)} = g g_j^{(1)}g^{-1}$ for all $j=1, \cdots, k$.  The  $k$-sector orbifold $\tilde\XX^{[k]}$ 
consists of a disjoint union of sub-orbifolds of $\XX$
\[
\tilde \XX^{[k]} = \bigsqcup_{({\bf g}) \in \cT_k} \XX_{({\bf g})},
\]
where $\cT_k$ denotes the set of  equivalence classes of  conjugacy   k-tuples in local groups.
Then  the groupoid $\tilde\cG^{[k]}$ associated to the k-sector  $\tilde\XX^{[k]}$  for $k\geq 2$  is given by 
  \[
  \tilde\cG^{[k]}_0 = \{(g_1, g_2, \cdots, g_k) \in  \tilde\cG_0 \times_e \tilde\cG_0 \cdots  \times_e \tilde\cG_0\},
  \]
  and 
  \[
   \tilde\cG^{[k]}_1 = \{(g_1, g_2, \cdots, g_k, h)  \in   (\tilde\cG^{[k]}_0,
  \cG_1)|   t (g_i) = s(h)\},
  \]
  with the source map $s(g_1, g_2, \cdots, g_k, h) = (g_1, g_2, \cdots, g_k)$ and 
  the target map $t(g_1, g_2, \cdots, g_k, h) = (h^{-1}g_1 h, h^{-1} g_2 h , \cdots, h^{-1}  g_k  h)$.

As in \cite{CR}, $\tilde \cG^{[k]}$ can be identified with the orbifold moduli space of constant pseudo-holomorphic maps from an orbifold sphere with $k+1$ orbifold points to $\XX$.  The obstruction  bundle over  the orbifold moduli space, given by the cokernel of the Cauchy-Riemann operator over the orbifold sphere coupled with the pull-back of the tangent bundle $T\cG$ of $\cG$, defines a complex   vector bundle $E^{[k]}$  over   the groupoid of k-sectors $\tilde\cG^{[k]}$, for $k\geq 2$.

\begin{definition}\label{log}
Let $E$ be  any complex  vector bundle   with an automorphism  $\Phi$ of finite order over a proper \'etale  groupoid $\cG$.  Choose a Hermitian metric on $E$ preserved by $\Phi$. Then $E$ has an eigen-bundle
decomposition
\[
E = \bigoplus_{m_i \in \Q\cap [0, 1)}   E (m_i ) 
\]
where $\Phi$ acts on $E(m_i)$ as multiplication by  $e^{2\pi \sqrt{-1}  m_i }$ for $m_i\in\Q \cap  [0, 1)$.  We define  
\[
E_\Phi =  \sum_{m_i \in \Q\cap ( 0, 1)}  m_i   E (m_i ), \qquad \text{and}\qquad 
E_{\Phi^{-1}} =  \sum_{m_i \in \Q\cap (0, 1)}  (1- m_i  )  E (m_i )
\]
as a linear combination of vector bundles with rational coefficients, or an element in 
$K^0(\cG)\otimes \Q$.
  \end{definition}

Given an almost complex orbifold $\XX$,  any orbifold complex vector  bundle over  a compact orbifold $\tilde\XX$,  or equivalently  any  vector bundle over $\tilde \cG$, 
has an automorphism.   Specifically,  if $E$ is  a   vector bundle over $\tilde \cG$, then
  the fiber  $E_g$ at $g\in \tilde\cG_0$ has a linear isomorphism  induced by the action of   
$g$.      
In particular,   $e^*T\cG$ is a complex vector bundle 
over $\tilde \cG$ with an automorphism
$\Phi$,  and $T\tilde\cG$ is a sub-bundle of $e^*T\cG$  on which $\Phi$ acts trivially. Let  $\cN_e$
be the normal bundle of the evaluation map $e$ with the  induced automorphism $\Phi$.
We can choose a $\cG$-invariant Hermitian metric  on $\cG$ such that   $\Phi$ acts unitarily on $\cN_e$.  So  the automorphism preserves the orthogonal  decomposition
\[
 e^*T\cG = T\tilde \cG \oplus \cN_e.
\]
As a complex vector bundle over $\tilde \cG$,  $\cN_e$  admits an eigen-bundle  decomposition 
    \[
  \cN_e  =  \bigsqcup_{(g)}  \cN_{(g)}  =     \bigsqcup_{(g)}   \bigoplus_{\theta_{(g)}   } \cN (\theta_{(g)})  \]
  where $\Phi$  acts on $\cN  (\theta_{(g)})   $ as   multiplication by   $e^{2\pi \sqrt{-1}\theta_{(g)} }$ with $\theta_{(g)} \in (0, 1)$, as $\Phi$ does not have eigenvalue $1$ on $\cN_e$.     

   Consider  the following  commutative diagram 
 \ba\label{e:map}   \xymatrix{
   \tilde\cG^{[2]} \ar[rr]^{e_1}\ar[dd]_{e_2} \ar[dr]^{e_{12} } &&  \tilde\cG \ar[dd]^{e}     \\
   &  \tilde\cG \ar[dr]^e &     \\
    \tilde\cG \ar[rr]^e &&  \cG. 
   }
\na
Let $\cN$ be the normal bundle to the map $e \circ e_{12}  = e \circ e_{1}\  =  e \circ e_{2}:
 \tilde\cG^{[2]} \to \cG$.  Then 
   $\cN$ as a complex vector bundle over $\tilde \cG^{[2]}$,   has three automorphisms 
   \begin{enumerate}
\item $\cN \cong \cN_{e_1} \oplus e_1^*\cN_{e}$ with an automorphism $\Phi_1 = Id\oplus e_1^*\Phi$, here $ \cN_{e_1}$ is the normal bundle to the map $e_1$. Then  $\cN_{\Phi_1} $, by Definition \ref{log}, is given by 
\[
\cN_{\Phi_1} = e_1^* \cN_{e, \Phi}.
\]

\item  $\cN \cong \cN_{e_2} \oplus e_2^*\cN_{e}$ with an automorphism $\Phi_2 = Id\oplus e_2^*\Phi$, here $ \cN_{e_2}$ is the normal bundle to the map $e_2$.  Then we have
\[
\cN_{\Phi_2} = e_2^* \cN_{e, \Phi}.
\]

\item  $\cN \cong \cN_{e_{12} } \oplus e_{12}^*\cN_{e}$ with an automorphism $\Phi_{12} = Id\oplus e_{12}^*\Phi$, here $ \cN_{e_{12}}$ is the normal bundle to the map $e_{12}$. Then we have
\[
\cN_{\Phi_{12}^{-1} } = e_{12}^* \cN_{e, \Phi^{-1}}.
\]

\end{enumerate}
When $\XX$ is compact, all  of these three automorphisms have finite order. 
  
  The main result of this section  is the following intrinsic description of the obstruction bundle
  $E^{[2]}\to \tilde\cG^{[2]}$ in the definition of  the Chen-Ruan cohomology. This  generalises the results  of  \cite{ChenHu} and \cite{JKK} to general almost complex orbifolds.  
   Our proof using the  orbifold index theorem  was inspired by the  proof in \cite{ChenHu} for abelian cases, and  is simpler  than the proof given in \cite{JKK} for smooth Deligne-Mumford stacks. 
  
  \begin{theorem} \label{theorem:2}
   The obstruction bundle $E^{[2]}$  in the construction of the Chen-Ruan product   satisfies    the following identity
  \[
  E^{[2]}  \oplus  \cN = e_1^* \cN_{e, \Phi} + e_2^*\cN_{e,\Phi}  + e_{12}^* \cN_{e, \Phi^{-1}}
  \]
  in $K^0(\tilde\cG^{[2]})\otimes \Q$. 
  \end{theorem}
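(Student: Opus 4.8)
The plan is to prove the identity one connected component $\XX_{(g_1,g_2)}$ of $\tilde\cG^{[2]}$ at a time, realize $E^{[2]}$ on it as a fixed part minus the index of a Cauchy--Riemann operator on the orbifold sphere, and then read off the right--hand side from the orbifold index theorem of Remark~\ref{equ:Kar}.

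Fix $\XX_{(g_1,g_2)}$, set $g_3=(g_1g_2)^{-1}$ and $H=\langle g_1,g_2\rangle$ (defined up to conjugacy), and let $\cS$ be the genus--$0$ orbifold curve with three orbifold points of isotropy $\langle g_1\rangle,\langle g_2\rangle,\langle g_3\rangle$ appearing in Chen--Ruan's construction of $E^{[2]}$ in \cite{CR}. Over $\XX_{(g_1,g_2)}$ put $\cV=(e\circ e_{12})^*T\cG$; it carries the fibrewise isotropy action of $H$ (with $H$ trivial on the base), and $\cV^H=T\tilde\cG^{[2]}$, $\cN=\cV-\cV^H$. By definition $E^{[2]}$ is the cokernel bundle $R^1\pi_*(\mathrm{ev}^*T\cG)$ of the relative Cauchy--Riemann operator for the universal curve, with fibre $H^1(\cS,\underline{T_x\XX})$ at $x$; since $H^0(\cS,\underline{T_x\XX})=(T_x\XX)^H$ and $H^i=0$ for $i\ge 2$, in $K^0_{orb}(\XX_{(g_1,g_2)})\otimes\Q$ one gets
\[
E^{[2]}=\cV^H-\mathrm{Ind},\qquad \mathrm{Ind}:=\sum_{i\ge 0}(-1)^iR^i\pi_*(\mathrm{ev}^*T\cG),
\]
so $E^{[2]}\oplus\cN=\cV-\mathrm{Ind}$, and the theorem is equivalent to $\mathrm{Ind}=\cV-\bigl(e_1^*\cN_{e,\Phi}+e_2^*\cN_{e,\Phi}+e_{12}^*\cN_{e,\Phi^{-1}}\bigr)$.

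Next I would compute $\mathrm{Ind}$ using the family orbifold index formula of Remark~\ref{equ:Kar} for $\bar\partial_\cS$ coupled to $\mathrm{ev}^*T\cG$ (equivalently, its equivariant version for $\bar\partial_\Sigma$ on the trivial bundle $\cV$, with $[\Sigma/H]=\cS$, followed by taking $H$--invariants). The inertia orbifold of $\cS$ is $\cS$ together with $0$--dimensional twisted sectors over the three orbifold points labelled by the nontrivial elements of $\langle g_i\rangle$; as $\mathrm{ev}$ is constant along $\cS$, $\mathrm{ev}^*T\cG$ restricts there to $\cV$. The untwisted sector contributes $\tfrac12\chi_{orb}(\cS)\,\cV$ with $\chi_{orb}(\cS)=-1+\sum_i 1/n_i$, where $n_i$ is the order of $g_i$ (acting faithfully on $\cV$ because $\XX$ is effective), and the $i$--th orbifold point contributes, after comparing Chern characters,
\[
\frac{1}{n_i}\sum_{a=1}^{n_i-1}\frac{1}{1-\omega_i^{-a}}\sum_{\theta}e^{2\pi\sqrt{-1}\,\theta}\,\mathrm{ch}\bigl(\cV(\theta;g_i^a)\bigr),
\]
where $\omega_i$ is the weight of $g_i$ on the tangent line to $\cS$ there and $\cV(\theta;g_i^a)$ is the $e^{2\pi\sqrt{-1}\theta}$--eigenbundle of $g_i^a$ on $\cV$. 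With the standard orientation $\omega_i=e^{2\pi\sqrt{-1}/n_i}$, expanding $\cV$ into $g_i$--eigenbundles and applying the elementary identity $\sum_{a=1}^{n-1}\zeta^{am}(1-\zeta^a)^{-1}=m-\tfrac{n+1}{2}$ for a primitive $n$--th root $\zeta$ and $1\le m\le n-1$, the untwisted and twisted contributions combine to
\[
\mathrm{Ind}=\cV-\sum_{i=1}^3\ \sum_{\theta\in\Q\cap(0,1)}\theta\,\cV(\theta;g_i),
\]
which, on rewriting the $i=3$ term via $\cV(\theta;g_3)=\cV(1-\theta;g_1g_2)$, is precisely $\cV-\bigl(e_1^*\cN_{e,\Phi}+e_2^*\cN_{e,\Phi}+e_{12}^*\cN_{e,\Phi^{-1}}\bigr)$. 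Since $\mathrm{ch}$ is injective on $K^0_{orb}\otimes\Q$, the identity follows on each component and hence on $\tilde\cG^{[2]}$.

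The bulk of the work, and the step most liable to go wrong, is the convention bookkeeping: one has to fix the complex orientation of $\cS$ so that the isotropy generator $g_i$ acts on the tangent line by the primitive root $e^{2\pi\sqrt{-1}/n_i}$ (not its inverse), which is exactly what makes the Dedekind--type sums deliver the fractional parts $\theta$ rather than $1-\theta$, and what produces the asymmetry between the two incoming legs (giving $e_1^*\cN_{e,\Phi}$, $e_2^*\cN_{e,\Phi}$) and the outgoing leg (giving $e_{12}^*\cN_{e,\Phi^{-1}}$). Two further points need care but no new idea: in the non--abelian case several orbifold points of $\cS$ may lie over one branch point of $\cS/H$ with mutually conjugate isotropy, and one sums over them using conjugation--invariance of the eigenbundle data; and if $H$ acts non--effectively on $\Sigma$ then $\cS$ has generic isotropy, but that subgroup fixes $\XX_{(g_1,g_2)}$ pointwise and drops out. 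Before fixing signs I would check everything against the abelian case of \cite{ChenHu}, where $\cV$ is a sum of characters and $E^{[2]}$ picks up $(\theta_1+\theta_2+\theta_3-1)$ copies of each character on which $H$ acts nontrivially.
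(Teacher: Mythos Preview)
Your overall strategy---reduce to one component $\XX_{(g_1,g_2)}$, express $E^{[2]}\oplus\cN$ as $\cV-\mathrm{Ind}$ with $\cV=(e\circ e_{12})^*T\cG$, and compute $\mathrm{Ind}$ by the orbifold index theorem for $\bar\partial$ on the three-pointed sphere---is exactly the paper's approach. The reductions $H^0(\cS,\underline{T_x\XX})=\cV^H$ and $E^{[2]}=\cV^H-\mathrm{Ind}$ are correct and match what the paper establishes in (\ref{tangent})--(\ref{zero}).

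There is, however, a real gap in your final step. You compute $\mathrm{Ind}$ by writing down the Kawasaki contributions from the twisted sectors of $\cS$, which lands you in $H^*(\XX_{(g_1,g_2)},\C)$, and then invoke ``$\mathrm{ch}$ is injective on $K^0_{orb}\otimes\Q$''. That injectivity is false for orbifolds: already for $[\{pt\}/G]$ one has $K^0_{orb}\otimes\Q\cong R(G)\otimes\Q$ while $H^*\cong\Q$. Since $\XX_{(g_1,g_2)}$ carries the residual isotropy $Z_{G_x}(g_1)\cap Z_{G_x}(g_2)$, your cohomological computation only sees the rank (or ordinary Chern character) of $\mathrm{Ind}$ fibrewise, not its representation type, and so cannot pin down the class in $K^0_{orb}(\XX_{(g_1,g_2)})\otimes\Q$. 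You gesture at the fix (``equivalently, its equivariant version''), but the displayed formula you then write is the non-equivariant one.

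The paper closes this gap by a device that also eliminates your Dedekind sums. Decompose the fibre $\cN_{(g_1,g_2)}$ into irreducible $Z_{G_x}(g_1)\cap Z_{G_x}(g_2)$-representations $V_\lambda$. Since $g_1,g_2,(g_1g_2)^{-1}$ lie in the centre of this centralizer, each $\Phi_i$ acts on $V_\lambda$ by a \emph{single} scalar $e^{2\pi i\theta_i^\lambda}$ (Schur). Now apply the \emph{equivariant} orbifold index formula of Remark~\ref{equ:Kar}, with the centralizer acting trivially on $\cS$: for each $g$ in the centralizer one gets
\[
\mathrm{Tr}\bigl(g\bigm|(H^{0,1}(\Sigma)\otimes V_\lambda)^N\bigr)+\mathrm{Tr}(g|_{V_\lambda})=(\theta_1^\lambda+\theta_2^\lambda+\theta_{12}^\lambda)\,\mathrm{Tr}(g|_{V_\lambda}),
\]
which is an identity of characters and hence of representations. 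Summing over $\lambda$ gives the fibrewise identity in $R\bigl(Z_{G_x}(g_1)\cap Z_{G_x}(g_2)\bigr)\otimes\Q$, which then globalizes to $K^0_{orb}(\tilde\cG^{[2]})\otimes\Q$. In effect the paper replaces your appeal to injectivity of $\mathrm{ch}$ by the (correct) injectivity of the character map on $R(G)\otimes\Q$; the Schur step makes the three ages $\theta_i^\lambda$ appear directly, so the identity $\sum_a \zeta^{am}(1-\zeta^a)^{-1}=m-\tfrac{n+1}{2}$ is already absorbed into Proposition~4.2.2 of \cite{CR} and never needs to be written out.
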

  \begin{proof}  We first recall the definition of the obstruction bundle from  Chapter 4.3 in \cite{ALR} 
and   Section 4 in  \cite{CR}.  Identify  $\tilde\cG^{[2]}$ as  the moduli space of constant representable orbifold morphisms   from the  orbifold
Riemann sphere  $ S^2$ with three orbifold points to $\XX$.     Given a point 
    \[
    (  g_1, g_2) \in \tilde\cG^{[2]}_0
    \]
    with $s(g_i) = t(g_i) =x$, 
   let $N$ be a finite subgroup generated by $g_1, g_2$ in the local  group 
   \[
   G_x = s^{-1}(x)\cap t^{-1}(x)\]
   By Lemma 4.5 in \cite{ALR}, up to an isomorphism, $N$ depends only on the connected component of  $\tilde\XX^{[2]}$, the orbifold associated to $\tilde\cG^{[2]}$.  Let $m_1$, $m_2$ and $m_3$ be the   order of $g_1$, $g_2$ and $g_1g_2$ respectively. There is a smooth compact  Riemann surface  $\Sigma$ such that
   $\Sigma/N$ is an   orbifold
Riemann sphere  $  (S^2, (m_1, m_2, m_3))  $ with three orbifold points of multiplicities $(m_1, m_2, m_3)$.     Note that
$\Sigma$ is given by the quotient of the orbifold universal cover of $  (S^2, (m_1, m_2, m_3))  $  by the kernel of the surjective
homomorphism  (Cf. (4.14) in \cite{ALR})
\[
\pi_1^{orb} (S^2, (m_1, m_2, m_3))  \to N.
\]

   The constant  orbifold morphism corresponding to $ (x,  g_1, g_2)$
      is represented by 
   an ordinary constant  map 
   \[
   \tilde f_x: \Sigma \to \tilde U_x
   \]
   for an orbifold chart  $(\tilde U_x, G_x)$ at $x$.  The elliptic complex for the
   obstruction bundle over the point $ (x,  g_1, g_2)$ is the $N$-invariant part of 
   the elliptic complex 
   \[
   \bar{\partial}_\Sigma: \Omega^0(\Sigma, T_x\cG) \longrightarrow \Omega^{0,1}(\Sigma, T_x\cG).
   \]
   So the  tangent space of the moduli space $\tilde\cG^{[2]} $ at    $ (  g_1, g_2) \in \tilde\cG^{[2]}_0$
   is given by
   \ba\label{tangent}
   T_{(g_1, g_2)}\tilde\cG^{[2]}=    \big(H^{0}(\Sigma) \otimes T_x\cG  \big)^N 
   \na
   and the   obstruction bundle over the point $ (  g_1, g_2) \in \tilde\cG^{[2]}_0$ is given by
    \[
    E^{[2]}_{ (  g_1, g_2)} = \big (H^{0, 1}(\Sigma) \otimes  T_x \cG \big)^N ,   \]
   as $Z_{G_x}(g_1)\cap Z_{G_x}(g_2)$-representations.  Here $Z_{G_x}(g_1)$ and $Z_{G_x}(g_2)$ are the centralizers of $g_1$ and $g_2$ respectively in the local  group $G_x$.  
   
   Note that  $(e\circ e_{12})^* T_x\cG = \cN_{(g_1, g_2)} \oplus T_{(g_1, g_2)}\tilde\cG^{[2]}$ and $N$ acts on $T_{(g_1, g_2)}\tilde\cG^{[2]}$ trivially.  We have
   \[\begin{array}{lll}
 &&  \big(H^{0}(\Sigma) \otimes T_{(  g_1, g_2)} \cG  \big)^N \\
    & =  &    \big(H^{0}(\Sigma) \otimes  T_{(g_1, g_2)}\tilde\cG^{[2]}  \big)^N \oplus  \big (H^{0}(\Sigma) \otimes   \cN_{(g_1, g_2)}  \big)^N\\
    &=& T_{(g_1, g_2)}\tilde\cG^{[2]} \oplus  \big (H^{0}(\Sigma) \otimes   \cN_{(g_1, g_2)}  \big)^N. 
\end{array}   \]
   Combining  with (\ref{tangent}), this leads to 
 \ba\label{zero}
     \big (H^{0}(\Sigma) \otimes   \cN_{(g_1, g_2)}  \big)^N =0.
       \na

   Applying the orbifold  index formula (Proposition 4.2.2 of \cite{CR})  to  the $N$-invariant part of 
   the elliptic complex 
   \[
   \bar{\partial}_\Sigma|_{T_{(g_1, g_2)}\tilde\cG^{[2]}}: \Omega^0(\Sigma, T_{(g_1, g_2)}\tilde\cG^{[2]}) \longrightarrow \Omega^{0,1}(\Sigma, T_{(g_1, g_2)}\tilde\cG^{[2]}),
   \]  
  we get
 \ba\label{index}
 Index ( \bar{\partial}_\Sigma|_{T_{(g_1, g_2)}\tilde\cG^{[2]}} ) = 
  \dim T_{(g_1, g_2)}\tilde\cG^{[2]} - 0. 
\na
Here  the contributions  to the orbifold  index formula  from the three singular points are all zero due to the trivial action of $N$ on $T_{(g_1, g_2)}\tilde\cG^{[2]}$. 
As 
\[ \begin{array}{lll}
&&  Index ( \bar{\partial}_\Sigma|_{T_{(g_1, g_2)}\tilde\cG^{[2]}} )  \\
&=&   
 \dim  \big(H^{0}(\Sigma) \otimes  T_{(g_1, g_2)}\tilde\cG^{[2]}  \big)^N - \dim 
 \big (H^{0, 1}(\Sigma) \otimes  T_{(g_1, g_2)}\tilde\cG^{[2]}  \big)^N \\
 &=&  \dim T_{(g_1, g_2)}\tilde\cG^{[2]} -  \dim 
 \big (H^{0, 1}(\Sigma) \otimes  T_{(g_1, g_2)}\tilde\cG^{[2]}  \big)^N ,
 \end{array}
  \]
  together with (\ref{index}), 
 we obtain    $   \big (H^{0, 1}(\Sigma) \otimes  T_{(g_1, g_2)}\tilde\cG^{[2]}  \big)^N =0$.  Therefore, 
  \[\begin{array}{lll}
  E^{[2]}_{ (  g_1, g_2)}  
&=& \big (H^{0, 1}(\Sigma) \otimes  T_{(g_1, g_2)}\tilde\cG^{[2]}   \big)^N \oplus  \big (H^{0, 1}(\Sigma) \otimes  \cN_{(g_1, g_2)}  \big)^N\\[3mm]
&=&  \big (H^{0, 1}(\Sigma) \otimes  \cN_{(g_1, g_2)}  \big)^N
\end{array}
\]
 as    $Z_{G_x}(g_1)\cap Z_{G_x}(g_2)$-representations,    
   which can be glued together using the  groupoid action to form a complex vector bundle  $E^{[2]}$ over the groupoid $\tilde\cG^{[2]}$. 
 
Let 
\ba\label{decomp}
\cN_{(g_1, g_2)} = \bigoplus_\lambda V_\lambda
\na
be the decomposition into   irreducible   $Z_{G_x}(g_1)\cap Z_{G_x}(g_2)$-representations.
 
Note that the automorphisms $\Phi_1$, $\Phi_2$ and $\Phi_{12}^{-1}$, induced by the action of
$g_1$, $g_2$ and $(g_1g_2)^{-1}$ respectively,  preserve each irreducible component 
$V_\lambda$ in  (\ref{decomp}), and commute with the action of $N$.  Any eigen-subspace of $V_\lambda$ of $\Phi_1$, $\Phi_2$ or  $\Phi_{12}^{-1}$ 
is also a representation of $Z_{G_x}(g_1)\cap Z_{G_x}(g_2)$. The irreducibility of $V_\lambda$
implies that 
 \begin{enumerate}
\item $ (V_\lambda)_{\Phi_1} = \theta^\lambda_1 V_\lambda $ where $\Phi_1$ acts on 
$V_\lambda $ as multiplication by $e^{2\pi\sqrt{-1} \theta^\lambda_1}$ for $\theta_1 \in (0, 1)\cap \Q$,
\item $ (V_\lambda)_{\Phi_2} = \theta^\lambda_2 V_\lambda $ where $\Phi_2$ acts on 
$V_\lambda $ as multiplication by  $e^{2\pi\sqrt{-1} \theta^\lambda_2}$ for $\theta_2 \in (0, 1)\cap \Q$,
\item $ (V_\lambda)_{\Phi^{-1}_{12}} = \theta^\lambda_{12} V_\lambda $ where $\Phi^{-1}_{12}$ acts on 
$V_\lambda $ as multiplication by  $e^{2\pi\sqrt{-1} \theta^\lambda_{12}}$ for $\theta_{12} \in (0, 1)\cap \Q$.
\end{enumerate}

Note that  $  Z_{G_x}(g_1)\cap   Z_{G_x}(g_2)$ acts trivially the orbifold sphere $(S^2, (m_1, m_2, m_3))$. 
Applying the equivariant  version of Kawasaki's  orbifold  index formula (See Remark \ref{equ:Kar}) to the $N$-invariant part of
the elliptic complex
\[
 \bar{\partial}_\Sigma: \Omega^0(\Sigma, V_\lambda) \longrightarrow \Omega^{0,1}(\Sigma, V_\lambda),
   \]
we get, for $g\in Z_{G_x}(g_1)\cap Z_{G_x}(g_2)$, 
\ba\label{good:1}\begin{array}{lll}
&&  Tr \big(g|_{(H^{0}(\Sigma) \otimes  V_\lambda)^N }\big)  - Tr \big( g|_{(H^{0, 1}(\Sigma) \otimes  V_\lambda )^N}  \big)  \\[2mm]
&=& Tr (g|_{V_\lambda})  -   (\theta^\lambda_1 + \theta^\lambda_2 + \theta^\lambda_{12}) Tr (g|_{V_\lambda}).
\end{array}
\na
 From (\ref{zero}), we get  $   \big (H^{0}(\Sigma) \otimes   V_\lambda \big)^N =0$ for each irreducible representation $V_\lambda$ in (\ref{decomp}).  Then (\ref{good:1})  gives rise to 
\[
Tr \big( g|_{(H^{0, 1}(\Sigma) \otimes  V_\lambda )^N}  \big)+   Tr (g|_{V_\lambda}) =   (\theta^\lambda_1 + \theta^\lambda_2 + \theta^\lambda_{12}) Tr (g|_{V_\lambda}),
\]
for each $g \in Z_{G_x}(g_1)\cap Z_{G_x}(g_2)$. 
This implies that,  as a  representation of
 $ Z_{G_x}(g_1)\cap Z_{G_x}(g_2)$,  
 \ba\label{good:2}
  \big (H^{0, 1}(\Sigma) \otimes  V_\lambda \big)^N   \oplus V_\lambda = (V_\lambda)_{\Phi_1} 
  + (V_\lambda)_{\Phi_2} +  (V_\lambda)_{\Phi^{-1}_{12}}, 
  \na
  after combining  like terms.
 Taking the direct sum over $\lambda$ as in (\ref{decomp}), we obtain
  \ba\label{fiberwise}
  E^{[2]}_{(g_1, g_2)}  \oplus \cN = (e_1^*\cN_{e, \Phi} )_{(g_1, g_2)}
  +( e_2^*\cN_{e, \Phi})_{(g_1, g_2)} +  (e_{12}^*\cN_{e, \Phi^{-1}})_{(g_1, g_2)}
  \na
  in the  representation ring of $Z_{G_x}(g_1)\cap Z_{G_x}(g_2)$ over the rational  coefficients $\Q$.  
  
  The normal bundle $\cN$ is a complex vector bundle over the groupoid $\tilde\cG^{[2]}$ so  the fiberwise identity (\ref{fiberwise}) leads to the identity
   \[
  E^{[2]}  \oplus  \cN = e_1^* \cN_{e, \Phi} + e_2^*\cN_{e,\Phi}  + e_{12}^* \cN_{e, \Phi^{-1}}
  \]
  in $K^0(\tilde\cG^{[2]})\otimes \Q$.  This completes the proof of the theorem.
   \end{proof}

  \begin{remark} \label{re:main:2} \begin{enumerate}
\item  One particular consequence of Theorem \ref{theorem:2} is that the  linear combination of vector bundles with rational coefficients 
\[
  e_1^* \cN_{e, \Phi} + e_2^*\cN_{e,\Phi}  + e_{12}^* \cN_{e, \Phi^{-1}}- \cN,
\]
after combining   like terms, is a complex vector bundle which can be identified with the obstruction bundle $E^{[2]}$  in the construction of the Chen-Ruan product.
\item In \cite{Hep}, Hepworth also provides another  description of the obstruction bundle.  As remarked in the introduction of \cite{Hep}, his description of the obstruction bundle involves the theory of orbifold Riemann surfaces to get certain inequalities regarding the degree shifting in the definition of the Chen-Ruan cohomology. 
 \end{enumerate}
 \end{remark}
  
  With this intrinsic description of the obstruction bundle, we can  extend 
  Chen-Hu's alternative definition of  the Chen-Ruan cohomology ring  for abelian orbifolds
  to general orbifolds by identifying
  \[
  H_{CR}^*( \XX) = H^{*-2\iota}(\tilde \XX)
  \]
  with certain formal cohomology classes on $\XX$.  
 We firstly recall the definition of the  Chen-Ruan cohomology of an almost complex, compact orbifold. More details  can be found in \cite{CR}.

  \begin{definition} \label{CR} The Chen-Ruan cohomology $(H^*_{CR}(\XX), *_{CR} )$  of $\XX$ is defined to be  
\[
H^d_{CR}(\XX) =      \bigoplus_{(g)\in \cT_1} H^{d-2\iota_{(g)}} (\XX_{(g)}, \C)
\]
 with a degree shift and  a new  product  $*_{CR}$ 
given  by the following formula, for $\omega_1, \omega_2  \in H^* (\tilde \XX, \C)$, 
\[
  \omega_1 *_{CR}   \omega_2  =( e_{12})_* \bigl(e_1^*\omega_1 \cup e_2^*\omega_2 \cup e(E^{[2]}) \bigr), 
\]
where $e(E^{[2]}) \in H^*(\tilde \XX^{[2]}, \C)$ is the cohomological Euler  class of the obstruction bundle $E^{[2]}$. 
  The   degree shifting number $\iota: \bigsqcup_{(g) \in \cT_1} \XX_{(g)}   \to \Q$ is   defined
to be 
   \[
   \iota_{(g)}  = \sum_{\theta_{(g)}}  rank_\C (\cN (\theta_{(g)}) ) \theta_{(g)} ,
   \]
a locally constant function on $\tilde\XX$.       
\end{definition}

\begin{remark}The Chen-Ruan product can also be defined using the 3-point function as follows. Given $(g_1, g_2) \in \cT_2$, let $e_1, e_2$ and $e_{12}$  be the orbifold embeddings in (\ref{e:map}). Denote  
\[
e_3 = I \circ e_{12}:  \XX_{(g_1, g_2)} \longrightarrow \XX_{(g_3)}
\]
where $g_3 = (g_1 g_2)^{-1}$ and $ I:  \tilde \XX \to \tilde \XX$ is the involution defined by $(x, (g)_{G_x})\mapsto 
(x, (g^{-1})_{G_x})$.  Then   for $\omega_1 \in H^*(\XX_{(g_1) }, \C)$ and 
$\omega_2 \in H^*(\XX_{(g_2) }, \C)$, 
  the Chen-Ruan product 
\[
\omega_1 *_{CR} \omega_2 \in H^*(\XX_{(g_1g_2)}, \C)
\]
is uniquely determined by the 3-point function
\ba\label{3-point}
\< \omega_1 *_{CR} \omega_2 , \omega_3 \> = \int_{\XX_{(g_1, g_2)}} ^{orb} e_1^* \omega_1
\wedge e_2^* \omega_2 \wedge e_3^* \omega_3 \wedge e(E^{[2]}_{(g_1, g_1)} ) 
\na
for any  compactly supported $\omega_3 \in H^*(\XX_{(g_3)}, \C)$. 
\end{remark}

 Given an oriented real   orbifold   vector bundle $V$ over $\XX$, or equivalently, an oriented   vector bundle $V$ over $\cG$, there is a compactly supported differential  form  $\Theta_V$  of $V$ such that
 \[
 \int^{orb}_{V} \alpha \wedge \Theta_V = \int^{orb}_{\XX} i^*\alpha,
 \]
 for any differential form $\alpha$ on $V$, and $i: \XX \to V$ is the  inclusion map defined by the  zero section.  Such a differential form defines the Thom class $\Theta(V)$ of $V$. 
 
 Denote by
 \[
 \Theta(\cN_e)  = \big(  \cdots,  \Theta(\cN_{e_{(g)} } ), \cdots      \big) \in \bigoplus_{(g) \in \cT_1}  H^*(\cN_{(g)})
 \]
  the Thom class of the normal bundle  $\cN_e$ over $\tilde \XX =\bigsqcup_{(g)\in \cT_1}  \XX_{(g)}$ for the evaluation map $e:  \tilde \XX \to \XX$.   Note that the degree of 
  $\Theta(\cN_{(g)})$   is $2n_{(g)}$, where $n_{(g)}$ is  the complex  codimension of $\XX_{(g)}$ in $\XX$. 
  The homomorphism 
  \ba\label{Thom}
  H^*(\tilde \XX)  =  \bigoplus_{(g) \in \cT_1}  H^*(\XX_{ (g)} )
   \longrightarrow   \bigoplus_{(g) \in \cT_1}  H^{*+2n_{(g)} } (\cN_{(g)})
  \na
   sending $\omega_{(g)}  \in H^*(\tilde \XX_{(g)} )$ to $\pi_{(g)}^*\omega_{(g)}   \wedge  \Theta(\cN_{(g)})$,  is the Thom isomorphism. 
   Here $\pi_{(g)}:   \cN_{e_{(g)} }  \to  \XX_{(g)}$ is the bundle projection.     Identifying a neighbourhood of the zero section in $\cN_{e_(g)}$  containing the support of   $\Theta(\cN_{e_(g)})$,  with the orbifold
  neighbourhood  of $\tilde \XX_{(g)}$ in $\XX$, we obtain the push-forward map
 \[
 e_*:  H^*(\tilde \XX)  = \bigoplus_{(g) \in \cT_1}  H^*(\XX_{ (g)} )  \longrightarrow   
  \bigoplus_{(g) \in \cT_1}    H^{* + 2n_{(g)} } (\XX).  \]
  
  As before,  let 
\[
\cN_e   =     \bigsqcup_{(g)\in \cT_1}   \bigoplus_{\theta_{(g)}   } \cN  (\theta_{(g)})  \longrightarrow
 \bigsqcup_{(g)}  \XX_{(g)} \]
be  the eigen-bundle decomposition for the automorphism $\Phi$. 
   
 \begin{definition}
 The fractional Thom class of the normal bundle $( \cN_{(g)}, \Phi)$  over 
 $ \XX_{(g)}$ is defined by the  formal wedge  product
 \[
 \Theta(\cN_{(g)}, \Phi) =  \bigwedge_{\theta_{(g)} } \big(\Theta (\cN (\theta_{(g)} )\big)^{\theta_{(g)}},
 \]
 of  formal degree  $2\iota_{(g)}  = \sum_{\theta_{(g)}} \theta_{(g)} $, with    compact support  in   a neighborhood of  $ \XX_{(g)}$ in $\XX$. 
  \end{definition}

 For each    $ (g)\in \cT_1$, denote by $  H_\Phi^{*+2\iota_{(g)} }(\XX)$ the set of formal  wedge  products 
\[
 H_\Phi^{*+2\iota_{(g)} }(\XX) = \{  \omega  \wedge  \Theta(\cN_{(g)}, \Phi) | \omega  \in H^*(\XX )\}. 
\]
    Elements in  $ H_\Phi^{*+2\iota_{(g)}  }(\XX)$, called formal cohomology classes, 
can be represented by formal  products 
of closed differential forms on $\XX$ and a formal fraction of  differential forms for the fractional Thom class. These formal differential forms and their  formal cohomology classes have the 
wedge product obtained from the formal  product. We use 
the convention that integration of a  formal differential form  over $\XX$ vanishes  unless 
its formal  degree agrees with the dimension of $\XX$.

The fractional Thom class  $\Theta(\cN_e, \Phi)$ can be employed to define  a formal push-forward map 
\ba\label{Thom:Phi0}
 e_*^\Phi:      H^*(\tilde \XX)  =  \bigoplus_{(g) \in \cT_1}  H^*(\XX_{ (g)} )   \longrightarrow
 \bigoplus_{(g) \in \cT_1}   H_\Phi^{*+2\iota_{(g)} }(\XX)\na
obtained from the Thom isomorphism (\ref{Thom}) with the usual Thom class replaced by
the corresponding fractional Thom class.  It is clear from the definition that $e_*^\Phi$ is injective.  
      
  Similarly, we can define another formal   push-forward map 
\ba \label{Thom:Phi1}
 e_*^{\Phi^{-1}}:      H^*(\tilde \XX)  =  \bigoplus_{(g) \in \cT_1}  H^*(\XX_{ (g)} )   \longrightarrow
 \bigoplus_{(g) \in \cT_1}   H_{\Phi^{-1}} ^{*+ 2n_{(g)} - 2\iota_{(g)} }(\XX)
\na
with the  fractional Thom class     $\Theta(\cN_{(g)}, \Phi)$    replaced by
\[
\Theta(\cN_{(g)}, \Phi^{-1}) =  \bigwedge_{\theta_{(g)} } \big(\Theta (\cN (\theta_{(g)} )\big)^{ 1- \theta_{(g)}}.
 \]

The following lemma explains the role of these two formal push-forward maps.

 \begin{lemma}\label{lemma:identity} The Thom class of the normal bundle $\cN_{(g)}  \to \XX_{(g)} $ is given by
$
\Theta(\cN_{(g)}, \Phi )  \wedge  \Theta(\cN_{(g)}, \Phi^{-1}). 
$
  Moreover,   For $\a  \in H^*(  \XX_{(g)} )$ and  $\b \in H^*(  \XX_{(g^{-1})} )$, we have
\[
\int_{\XX  }^{orb} e_*^\Phi (\a) \wedge e_*^{\Phi^{-1}} (I^* \b) = \int_{ \XX_{(g)}}^{orb} \a\wedge I^* \b.
\]
\end{lemma}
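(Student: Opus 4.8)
The plan is to prove the two assertions of Lemma~\ref{lemma:identity} separately, the first being a purely bundle-theoretic identity about Thom classes and the second a consequence of it together with the projection/Thom formalism.

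\textbf{Step 1: The Thom class factorization.} First I would observe that the eigenbundle decomposition $\cN_{(g)} = \bigoplus_{\theta_{(g)}} \cN(\theta_{(g)})$ is an orthogonal direct sum of complex orbifold vector bundles, and that the Thom class of a direct sum is the (exterior) product of the Thom classes of the summands: $\Theta(\cN_{(g)}) = \bigwedge_{\theta_{(g)}} \Theta(\cN(\theta_{(g)}))$ (pulled back to the total space via the projections, as usual). Now by the very definition of the fractional Thom classes,
\[
\Theta(\cN_{(g)},\Phi) \wedge \Theta(\cN_{(g)},\Phi^{-1}) = \Big(\bigwedge_{\theta_{(g)}} \Theta(\cN(\theta_{(g)}))^{\theta_{(g)}}\Big) \wedge \Big(\bigwedge_{\theta_{(g)}} \Theta(\cN(\theta_{(g)}))^{1-\theta_{(g)}}\Big) = \bigwedge_{\theta_{(g)}} \Theta(\cN(\theta_{(g)}))^{\theta_{(g)} + (1-\theta_{(g)})},
\]
which, since the formal exponents add to $1$ on each factor, equals $\bigwedge_{\theta_{(g)}} \Theta(\cN(\theta_{(g)})) = \Theta(\cN_{(g)})$. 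The only thing to be careful about here is the bookkeeping convention for the formal fractional powers: one must check that the formal product is defined so that $\Theta(V)^a \wedge \Theta(V)^b = \Theta(V)^{a+b}$, which is exactly how the formal cohomology classes $H_\Phi^*(\XX)$ were set up in the paragraph preceding the lemma. This gives the first claim.

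\textbf{Step 2: The integration formula.} For the second identity, I would unwind both push-forward maps through the Thom isomorphism. By definition, $e_*^\Phi(\alpha)$ is represented (after identifying a tubular neighbourhood of $\XX_{(g)}$ in $\XX$ with a neighbourhood of the zero section in $\cN_{(g)}$) by $\pi_{(g)}^*\alpha \wedge \Theta(\cN_{(g)},\Phi)$, and similarly $e_*^{\Phi^{-1}}(I^*\beta)$ is represented by $\pi_{(g)}^*(I^*\beta) \wedge \Theta(\cN_{(g)},\Phi^{-1})$ — here I use that $I: \XX_{(g^{-1})} \to \XX_{(g)}$ is the canonical identification of underlying suborbifolds, so $\cN_{(g^{-1})}$ is identified with $\cN_{(g)}$ and both push-forwards live over the same tubular neighbourhood. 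Then
\[
\int_{\XX}^{orb} e_*^\Phi(\alpha) \wedge e_*^{\Phi^{-1}}(I^*\beta) = \int_{\cN_{(g)}}^{orb} \pi_{(g)}^*(\alpha \wedge I^*\beta) \wedge \Theta(\cN_{(g)},\Phi) \wedge \Theta(\cN_{(g)},\Phi^{-1}).
\]
By Step~1 the product of the two fractional Thom classes is the genuine Thom class $\Theta(\cN_{(g)})$, so the defining property of the Thom class (the displayed equation $\int^{orb}_V \alpha \wedge \Theta_V = \int^{orb}_\XX i^*\alpha$ just above the lemma, applied with $V = \cN_{(g)}$ over base $\XX_{(g)}$) collapses the fiber integration and yields $\int_{\XX_{(g)}}^{orb} \alpha \wedge I^*\beta$, as desired. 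One should note that the formal degrees work out: $e_*^\Phi(\alpha)$ has formal degree raised by $2\iota_{(g)}$, $e_*^{\Phi^{-1}}(I^*\beta)$ by $2n_{(g)} - 2\iota_{(g)}$, so the total degree shift is $2n_{(g)}$, exactly the codimension, which is precisely what is needed for the integral over $\XX$ to be (formally) nonzero and to match the integral over $\XX_{(g)}$.

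\textbf{Main obstacle.} I expect the genuinely delicate point to be not the algebra but the geometric identification underlying the push-forward: making sure that $e_*^\Phi$ and $e_*^{\Phi^{-1}}$ are defined using the \emph{same} tubular neighbourhood and the \emph{same} metric splitting $e^*T\cG = T\tilde\cG \oplus \cN_e$, so that their wedge product can be computed fiberwise over $\cN_{(g)}$ rather than requiring a comparison of two a priori different collar identifications; and checking that the formal-differential-form conventions (in particular the rule that integration of a formal form vanishes unless its formal degree equals $\dim\XX$, and that the formal product respects addition of fractional exponents) are consistent enough that Step~1's cancellation of exponents is legitimate. Once those conventions are fixed as in the paragraphs preceding the lemma, both steps are short.
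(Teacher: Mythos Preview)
Your proposal is correct and follows essentially the same argument as the paper: first combine the fractional exponents $\theta_{(g)}$ and $1-\theta_{(g)}$ to recover the genuine Thom class of each eigenbundle summand, then unwind the two formal push-forwards over the same tubular neighbourhood $\cN_{(g)}$ and apply the defining property of the Thom class to reduce the integral over $\XX$ to one over $\XX_{(g)}$. Your added remarks on degree-matching and on using a common tubular neighbourhood are helpful sanity checks but do not alter the strategy.
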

\begin{proof}  Note that $\cN_{(g)} = \bigoplus_{\theta_{(g)}   } \cN  (\theta_{(g)}) $, 
 from which we get
  \[
\Theta(\cN_{(g)}, \Phi) = \bigwedge_{\theta_{(g)} } \big(\Theta (\cN (\theta_{(g)} )\big)^{\theta_{(g)}}, \qquad 
  \Theta(\cN_{(g)}, \Phi^{-1}) = \bigwedge_{\theta_{(g)} } \big(\Theta (\cN (\theta_{(g)} )\big)^{1- \theta_{(g)}}.
    \]
    Hence, $\Theta(\cN_{(g)}, \Phi)   \wedge   \Theta(\cN_{(g)}, \Phi^{-1})   =  \bigwedge _\theta \big(\Theta (\cN_e)\big) =  \Theta(\cN_e).$
 From the definition of  orbifold integration, we have
  \[\begin{array}{lll}
&&    \disp{ \int_{\XX }^{orb} e_*^\Phi (\a) \wedge e_*^{\Phi^{-1}} (I^* \b)}  \\[4mm]
  &=& \disp{ \int_{\cN_{(g)} }^{orb} } \pi_{(g)} ^*\a \wedge  \pi_{(g)}^* I^*  \b \wedge  \Theta(\cN_{(g)}, \Phi) \wedge  \Theta(\cN_{(g)}, \Phi^{-1}) \\[4mm]
  &=& \disp{ \int_{\cN_{(g)} }^{orb} } \pi_{(g)}^*\a\wedge  \pi_{(g)}^* I^* \b \wedge  \Theta(\cN_{(g)})\\[4mm]
  &=&\disp{  \int_{ \XX_{(g)}}^{orb} }  \a\wedge  I^*\b. 
  \end{array}
  \] 
  Here $\pi_{(g)}:  \cN_{e_{(g)} } \to \XX_{(g)}$ is the bundle projection  and $  \cN_{e_{(g)} } $ is identified with a neighbourhood of $\XX_{(g)}$ in $\XX$. 
  \end{proof}

  \begin{theorem} \label{deRham}
   Under the formal push-forward map 
\[
e_*^\Phi:  H^*(\tilde \XX)   \longrightarrow \bigoplus_{(g) \in \cT_1}   H_\Phi^{*+2\iota_{(g)} }(\XX),
\]
 the Chen-Ruan product  becomes  the wedge product.  
   That is,   for $\a, \b \in  H^{* } (\tilde\XX )$,  
  \[
  e_*^\Phi (\a *_{CR} \b) = e_*^\Phi (\a) \wedge  e_*^\Phi (\b).
  \]
\end{theorem}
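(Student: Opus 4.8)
The plan is to reduce the identity $e_*^\Phi(\a *_{CR} \b) = e_*^\Phi(\a)\wedge e_*^\Phi(\b)$ to the fiberwise computation carried out in Theorem \ref{theorem:2}, packaged through the push-forward and Thom-isomorphism formalism of Lemma \ref{lemma:identity}. The Chen-Ruan product is defined by $\a *_{CR} \b = (e_{12})_*\bigl(e_1^*\a \cup e_2^*\b \cup e(E^{[2]})\bigr)$, so applying $e_*^\Phi$ to the $\XX_{(g_1g_2)}$-component amounts to composing two push-forwards: first the ordinary push-forward $(e_{12})_*: H^*(\tilde\XX^{[2]})\to H^*(\tilde\XX)$ along the fibers of $e_{12}$, then the fractional push-forward $e_*^\Phi$ along $\cN_e$ over $\XX_{(g_1g_2)}$. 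The composite should be expressible as a single fractional push-forward from $\tilde\XX^{[2]}$ directly to $\XX$, along the normal bundle $\cN$ of $\tilde\XX^{[2]}$ in $\XX$, weighted by a Thom-type class built from the eigenbundle data. The key point is that the fractional Thom class governing this composite is exactly $\Theta(\cN_{e_{12}})\wedge (e_{12})^*\Theta(\cN_e,\Phi)$, and Theorem \ref{theorem:2} identifies the Euler class $e(E^{[2]})$ as the correction term relating this to the product of the two factor-wise fractional Thom classes $e_1^*\Theta(\cN_e,\Phi)$ and $e_2^*\Theta(\cN_e,\Phi)$.

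Concretely, first I would verify the functoriality/transitivity of the formal push-forward: for the commutative diagram (\ref{e:map}), one has $e\circ e_{12} = e\circ e_1 = e\circ e_2$, and for ordinary Thom classes the composite push-forward along $\cN_{e_{12}}$ followed by along $\cN_e$ equals the push-forward along $\cN$ (the normal bundle of the total map), because $\cN \cong \cN_{e_{12}}\oplus e_{12}^*\cN_e$ and Thom classes are multiplicative under direct sums. Second, I would translate this to the fractional setting: $e_*^\Phi\circ (e_{12})_*$ is, by definition, push-forward along $\cN$ with the formal weight $\Theta(\cN_{e_{12}})\wedge e_{12}^*\Theta(\cN_e,\Phi)$ inserted. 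Third — and this is where Theorem \ref{theorem:2} enters — I would compare this weight to $e_1^*\Theta(\cN_e,\Phi)\wedge e_2^*\Theta(\cN_e,\Phi)$. Taking Euler classes of both sides of $E^{[2]}\oplus\cN = e_1^*\cN_{e,\Phi} + e_2^*\cN_{e,\Phi} + e_{12}^*\cN_{e,\Phi^{-1}}$ in $K^0(\tilde\cG^{[2]})\otimes\Q$, and using multiplicativity of the (fractional) Euler class together with the identity $\cN_{e,\Phi}\oplus\cN_{e,\Phi^{-1}} = \cN_e$ from Lemma \ref{lemma:identity}, gives $e(E^{[2]})\cdot e(\cN) = e(e_1^*\cN_{e,\Phi})\cdot e(e_2^*\cN_{e,\Phi})\cdot e(e_{12}^*\cN_{e,\Phi^{-1}})$ as formal cohomology classes; equivalently, after passing to Thom classes under the relevant Thom isomorphisms, $e(E^{[2]})\cup\Theta(\cN) = e_1^*\Theta(\cN_e,\Phi)\wedge e_2^*\Theta(\cN_e,\Phi)\wedge e_{12}^*\Theta(\cN_e,\Phi^{-1})$ up to the bookkeeping of which factors are pulled back from which sector.

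Assembling these: starting from $e_*^\Phi(\a *_{CR}\b)$, write it as the $\cN$-push-forward of $e_1^*\a\cup e_2^*\b\cup e(E^{[2]})$ times the formal weight $\Theta(\cN_{e_{12}})\wedge e_{12}^*\Theta(\cN_e,\Phi)$. Since $\Theta(\cN_{e_{12}})$ together with $e_{12}^*(\Theta(\cN_e,\Phi)\wedge\Theta(\cN_e,\Phi^{-1})) = e_{12}^*\Theta(\cN_e)$ recovers $\Theta(\cN)$ (using $\cN\cong\cN_{e_{12}}\oplus e_{12}^*\cN_e$ and Lemma \ref{lemma:identity}), one rearranges $e(E^{[2]})\cup\Theta(\cN_{e_{12}})\wedge e_{12}^*\Theta(\cN_e,\Phi)$ to $e_1^*\Theta(\cN_e,\Phi)\wedge e_2^*\Theta(\cN_e,\Phi)$ by the previous step. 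Thus $e_*^\Phi(\a*_{CR}\b)$ becomes the $\cN$-push-forward of $\bigl(e_1^*(\a\wedge\Theta(\cN_e,\Phi))\bigr)\wedge\bigl(e_2^*(\b\wedge\Theta(\cN_e,\Phi))\bigr)$, which by commutativity of diagram (\ref{e:map}) and the projection formula is precisely $e_*^\Phi(\a)\wedge e_*^\Phi(\b)$ as formal cohomology classes on $\XX$.

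The main obstacle I anticipate is making the manipulation of \emph{formal} (fractional-degree) Thom classes rigorous: the objects $\Theta(\cN_{(g)},\Phi)$ are formal wedge powers $\bigwedge_\theta(\Theta(\cN(\theta)))^\theta$, and I need multiplicativity of the associated Euler/Thom classes under direct sums, compatibility with pullback along $e_1,e_2,e_{12}$, and the projection formula, all at the level of these formal classes rather than genuine cohomology classes. The cleanest route is probably to prove everything first for the genuine (integer-weight) classes — where the relations $e(E^{[2]})\cup\Theta(\cN) = e_1^*\Theta(\cN_e,\Phi)\wedge e_2^*\Theta(\cN_e,\Phi)\wedge e_{12}^*\Theta(\cN_e,\Phi^{-1})$ follow from Theorem \ref{theorem:2} by taking Euler classes in $K^0\otimes\Q$ and clearing denominators — and then observe that the formal-class statement is obtained by formally factoring the weights according to the eigenbundle decomposition, the legitimacy of which rests on the fact that all characteristic-class identities in play are multiplicative and hence extend verbatim to $\Q$-linear combinations of line/vector bundles. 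The secondary point to handle carefully is the degree bookkeeping (the shifts $2\iota_{(g)}$, $2n_{(g)}-2\iota_{(g)}$, and the identity $\iota_{(g_1)}+\iota_{(g_2)}+\iota_{(g_3)} = \operatorname{codim}$ which is encoded in $E^{[2]}\oplus\cN = \cdots$), but this is automatic once Theorem \ref{theorem:2} is in hand.
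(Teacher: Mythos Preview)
Your proposal is correct and uses the same essential input as the paper---namely, the Thom-class identity derived from Theorem \ref{theorem:2}---but the route is genuinely different. You argue by composing push-forwards: writing $e_*^\Phi\circ(e_{12})_*$ as a single push-forward along $\cN$ with formal weight $\Theta(\cN_{e_{12}})\wedge e_{12}^*\Theta(\cN_e,\Phi)$, and then using the identity $e(E^{[2]})\cup\Theta(\cN)=e_1^*\Theta(\cN_e,\Phi)\wedge e_2^*\Theta(\cN_e,\Phi)\wedge e_{12}^*\Theta(\cN_e,\Phi^{-1})$ together with Lemma \ref{lemma:identity} to rearrange the integrand into $e_1^*(\a\wedge\Theta(\cN_e,\Phi))\wedge e_2^*(\b\wedge\Theta(\cN_e,\Phi))$. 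The paper instead verifies the identity via the Poincar\'e pairing: it integrates the triple product $e_*^\Phi(\a)\wedge e_*^\Phi(\b)\wedge e_*^\Phi(\c)$ over $\XX$ against an arbitrary test class $\c\in H^*_c(\XX_{(g_3)})$, applies the same Thom-class identity to collapse this to the integral $\int_{\XX_{(g_1,g_2)}}^{orb} e_1^*\a\wedge e_2^*\b\wedge e_3^*\c\wedge e(E^{[2]}_{(g_1,g_2)})$, recognizes this as the three-point function (\ref{3-point}) defining $\a*_{CR}\b$, and concludes by Poincar\'e duality for orbifolds.

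What each approach buys: your direct computation yields an explicit formula and makes the functoriality of the formal push-forward visible, but (as you correctly anticipate) it requires carefully justifying the projection formula and the transitivity of Thom isomorphisms at the level of \emph{formal} fractional-degree classes. The paper's duality trick sidesteps exactly this: by pairing against a third fractional Thom class $e_*^\Phi(\c)$ and invoking Lemma \ref{lemma:identity}, every formal product of fractional Thom classes reassembles into an honest Thom class of a genuine bundle, and the computation reduces to an ordinary integral over the compact orbifold $\XX_{(g_1,g_2)}$. Thus the paper never needs to establish an identity of formal classes in its own right, only the equality of all its pairings with test classes---a cleaner finish given that the formal classes are defined only up to such pairings anyway.
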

\begin{proof}  From the definition of the wedge product, 
$e_*^\Phi (\a) \wedge  e_*^\Phi (\b) $  is supported near  $\tilde\XX^{[2]}$, or near the 0-section of the normal bundle $\cN$ over $\tilde\XX^{[2]}$. 

Given $(g_1, g_2) \in \cT_2$, let $e_1, e_2$ and $e_3$  be the orbifold embeddings  of
$ \XX_{(g_1, g_2)}$ in $\XX_{(g_1)}$, $\XX_{(g_2)}$ and $\XX_{(g_3)}$ respectively, where 
$g_3 = (g_1g_2)^{-1}$.    Let  $\pi_{(g)}$ be the projection $\cN_{(g)} \to \tilde \XX_{(g)}$. 
From   Theorem \ref{theorem:2}, we know that 
    the Thom  class  $\Theta(E^{[2]}_{(g_1, g_2) })$
   satisfies
  \[
  \Theta (E^{[2]}_{g_1} ) \wedge  \Theta (\cN) =  \Theta( \cN_{(g_1)}, \Phi) \wedge  \Theta(\cN_{(g_2)}, \Phi) \wedge    \Theta(\cN_{(g_3)}, \Phi),
  \]
   as a formal cohomology class  supported   near a neighbourhood of $ \XX^{[2]}_{(g_1, g_2)}$ in $\XX$.  
For $\a \in H^*(\XX_{(g_1)})$, $\b \in H^*(\XX_{(g_2)})$ 
and $ \c  \in H^*_c(\XX_{(g_3)})$,   near $\XX_{(g_1, g_2)}$ we have 
    \[
 \begin{array}{lll}
 && \disp{ \int_{\XX }^{orb} }   e_*^\Phi (\a) \wedge  e_*^\Phi (\b) \wedge  e_*^\Phi (\c)    \\[4mm]
 &=&  \disp{ \int_{\XX }^{orb} }  \pi_{(g_1)}^* (\a) \wedge  \pi_{(g_2)}^*(\b ) \wedge \pi_{(g_3)}^*(\c ) \wedge \Theta( \cN_{(g_1)}, \Phi )  \wedge  \Theta( \cN_{(g_2)}, \Phi ) \wedge  \Theta( \cN_{(g_3)}, \Phi) \\[4mm]
  &=& \disp{ \int_{\XX }^{orb} }      \pi_{(g_1)}^* (\a) \wedge  \pi_{(g_2)}^*(\b ) \wedge \pi_{(g_3)}^*(\c ) \wedge   \Theta  (E^{[2]}_{(g_1, g_2)})    \wedge\Theta(\cN )  \\[4mm]
  &=&  \disp{ \int_{\XX_{(g_1, g_2)} }^{orb} }   e_1^* (\a)  \wedge e_2^*(\b) \wedge e_3^*(\c) \wedge  e (E^{[2]}_{(g_1, g_2)}  ).
  \end{array}
  \]
 By Poinc\'are duality for orbifolds and the 3-point function  (\ref{3-point}) for the Chen-Ruan product, we get 
 \[
   e_*^\Phi (\a *_{CR} \b) = e_*^\Phi (\a) \wedge  e_*^\Phi (\b).
   \]

\end{proof}

\section{Stringy product on the orbifold K-theory}

In this section, we will define a stringy product on the orbifold K-theory of an almost compact orbifold $\XX$,  and establish a ring isomorphism between the orbifold K-theory of   $\XX$ and the Chen-Ruan cohomology ring using a modified version of the delocalized Chern character. We  first recall the definition of the delocalized Chern character and its properties. Then we give a geometric definition of  a stringy product on $K_{orb}^*(\XX)\otimes \C$ itself and show that  this product agrees with the Chen-Ruan product under a modified version of  the
delocalized Chern character. 

\subsection{Review of the Adem-Ruan-Zhang  stringy   product}\label{section:ARZ}
 
In \cite{ARZ}, Adem, Ruan and Zhang defined a stringy product on the twisted K-theory of the inertia orbifold $\tilde \XX$ of a compact, almost complex orbifold $\XX$.  This product will be called the 
Adem-Ruan-Zhang   product, denoted by  $\bullet_{ARZ}$.  To simplify  the construction, we first  discuss the untwisted case for a presentable, compact, almost complex orbifold $\XX$. 

The Adem-Ruan-Zhang   product on the orbifold K-theory of  the inertia orbifold $\tilde\XX$, under the canonical isomorphism  for  a presentable orbifold $\XX$
\[
K_{orb}^*(\tilde \XX) \cong K^*(\tilde\cG),
\]
 is  given by the formula
\ba\label{ARZ:product}
 \alpha \bullet_{ARZ} \beta =  ( e_{12})_* \bigl(e_1^*\alpha \cdot  e_2^*\beta \cdot \lambda_{-1}(E^{[2]}) \bigr), 
\na
for $\alpha, \beta \in  K^*(\tilde\cG)$, where  $ \lambda_{-1}(E^{[2]})  
= [\Lambda_{\C}^{even}E^{[2]} ]-[\Lambda_{\C}^{odd}E^{[2]} ]
 \in K^*(\tilde \cG^{[2]})$ is the K-theoretical  Euler  class of the obstruction bundle $E^{[2]}$.  
We recall  here that  the push-forward map 
\[
(e_{12})_*:  K^*(\tilde\cG^{[2]})  \longrightarrow  K^*(\tilde\cG )
\]
is given by the composition of the Thom isomorphism 
\[
  K^*(\tilde\cG^{[2]})   \cong  K_c^* (\cN_{e_{12}}) 
\] 
and the natural extension homomorphism $K_{c}^*(\cN_{e_{12}}) \to K^*(\tilde \cG)$ obtained by identifying
the normal bundle $\cN_{e_{12}}$ as a (component-wise)  tubular neighbourhood of $\tilde\cG^{[2]}$ in $\tilde \cG$. 
 Under the decomposition 
 \[
 K_{orb}^*(\tilde \XX) =\bigoplus_{(g)} K_{orb}^*( \XX_{(g)}), 
 \]
 for $\alpha_1 \in K_{orb}^*( \XX_{(g_1)})$ and $\alpha_2 \in K_{orb}^*( \XX_{(g_2)})$  we have
  \ba\label{ARZ:decomp}
 \alpha_1 \bullet_{ARZ} \alpha_2  \in K_{orb}^*( \XX_{(g_1g_2)}). 
 \na


For an  abelian  almost complex orbifold  $\XX$ obtained from a compact action of an abelian Lie group 
on a compact manifold, Becerra and Uribe in \cite{BecUri}  established  a ring homomorphism  
\[
(K^*_{orb}(\tilde \XX), \bullet_{ARZ}) 
 \longrightarrow (H^*_{dR}(\tilde \XX, \C), *_{CR}), 
 \]
 by modifying the usual Chern character as in \cite{JKK}. 
The  decomposition  theorem of Adem-Ruan (Theorem 5.1 in \cite{AR})
implies that  this  ring homomorphism is not an isomorphism in general.

 Now  we briefly recall the definition of  twisted K-theory for orbifolds. 
 A twisting on an orbifold groupoid $\cG$ is a generalized morphism
\[
\sigma:  \cG \longrightarrow PU(H)
\]
where  $PU(H)$, viewed as  a groupoid with the unit space $\{e\}$,  is the projective unitary group $PU(H)$ of an infinite dimensional separable complex Hilbert space $H$,   with the norm  topology.    A  twisting 
$\sigma$ gives rise to a principal $PU(H)$-bundle $\cP_\sigma$  over $\cG$. Two twistings are called equivalent if their associated $PU(H)$-bundles are isomorphic.  
  
  \begin{remark} It was argued in \cite{AS} that it is not desirable to use the  norm topology on the structure group $PU(H)$,  particularly in dealing with equivariant twisted K-theory. For orbifolds and orbifold groupoids, it suffices to consider   $PU(H)$ with the norm topology in the definition
  of twistings and twisted K-theory just  as in the non-equivariant case. The main reason is that the underlying groupoid for any orbifold is proper and \'etale,  so  locally it is given by the transformation groupoid for a finite group action.  As pointed out in \cite{AS},  the structure group $PU(H)$
  with the norm topology works fine for  an almost free action of a compact Lie group, where  the underlying orbifold is  presentable. 
  \end{remark}

 Let $\cK(H)$ be space of compact operators on $H$  endowed with the norm topology, and let  $Fred(H)$ be  the space of Fredholm operators endowed with the $*$-strong topology and
$Fred^1(H)$  be the space of self-adjoint elements in $Fred(H)$. Consider the associated 
bundles
\[
Fred^i (\cP_\sigma) = \cP_\sigma \times_{PU(H)} Fred^i (H)
\]
over the groupoid $\cG$. The $\sigma$-twisted $K$-theory of $\cG$, denoted $K^i(\cG, \sigma)$ 
for $i =0, 1$,  is defined to be
the set of homotopy classes of compactly supported sections of $Fred^i(\cP_\sigma)$.
For any orbifold $\XX = (X, \cU)$,  the twisted K-theory of $\XX$  is defined to be  the
 twisted K-theory
of the associated proper \'etale groupoid $\cG[\cU]$. Then the twisted orbifold $K$-theory 
is a module over the ordinary orbifold $K$-theory  and satisfies  the Mayer-Vieroris sequence   and  the Thom isomorphism for complex orbifold vector bundles.  Moreover, the  twisted orbifold $K$-theory admits the multiplicative 
operation 
\ba\label{multi}
K^*_{orb}(\XX, \sigma_1) \otimes K^*_{orb}(\XX, \sigma_2) \longrightarrow 
K^*_{orb}(\XX, \sigma_1+\sigma_2)
\na
where the addition $\sigma_1 + \sigma_2$ is  the new twisting from the 
group homomorphism $PU(H) \times PU(H) \to PU(H)$  induced by  the Hilbert
space tensor product $H \cong H\otimes H$.   See \cite{TXL} \cite{LU}
\cite{TX}  for  more detailed discussions.

Given a twisting $\tilde \sigma$ on $ \tilde \cG$,  Adem, Ruan and Zhang defined a  
product on $K_{orb}^*(\XX,  \tilde \sigma)$  by 
\ba\label{stringy2}
\alpha \bullet_{ARZ} \beta =  ( e_{12})_* \bigl(e_1^*\alpha \cdot  e_2^*\beta \cdot \lambda_{-1}(E^{[2]}) \bigr),
\na
 for $\alpha, \beta \in  K^*_{orb}(\tilde\XX, \tilde \sigma)=K^*(\tilde\cG, \tilde \sigma)$. 
Here applying the multiplicative 
operation  (\ref{multi}) and the  $K^*_{orb}(\XX)$-module structure,   
\[
e_1^*\alpha \cdot  e_2^*\beta \cdot \lambda_{-1}(E^{[2]})
\]
is an element in $K_{orb}^*(\tilde\cG^{[2]},  e_1^*\tilde\sigma + e_2^*\tilde\sigma)$, and  
\[
(e_{12})_*:  K_{orb}^*(\tilde\cG^{[2]}, e_{12}^*\tilde\sigma) \longrightarrow K_{orb}^*(\tilde\cG, \tilde \sigma) 
\]
 is the push-forward map for  (component-wise) orbifold embeddings obtained by the composition of the Thom isomorphism in twisted $K$-theory and the natural extension homomorphism for open
 embeddings. To make sense of the expression (\ref{stringy2}), we need a canonical identification 
 \[
 K_{orb}^*(\tilde\cG^{[2]},  e_1^*\tilde\sigma + e_2^*\tilde\sigma) \cong K_{orb}^*(\tilde\cG^{[2]}, e_{12}^*\tilde\sigma)
 \]
 which is guaranteed by the transgressive property of the twisting $\tilde \sigma$, see \cite{ARZ} for details.

 \subsection{Stringy product on orbifold K-theory}\label{section:stringy}

Let $\XX$ be an almost complex compact orbifold,  and $\tilde \XX =\bigsqcup_{(g)} \XX_{(g)}$  be its inertia orbifold.  Let $\cG$ and $\tilde \cG$ be their presenting proper \'etale groupoids. The evaluation map $e=\sqcup_{(g)} e_{(g)}:     \bigsqcup_{(g)} \XX_{(g)} \to \XX$ is presented by the map $e: \tilde \cG \to \cG$.

\begin{proposition} \label{ch:Phi} There exists a canonical ring homomorphism
\[
ch_\Phi: K^*_{orb} (\tilde \XX) \longrightarrow H^{*} (\tilde \XX, \C)
\]
such that 
  the following   diagram 
  \ba\label{diagram:com}
  \xymatrix{
K^*_{orb} (\tilde \XX)   \ar[rr]^{ch_\Phi}&  & H^* (\tilde \XX, \C) \\
K^*_{orb}  ( \XX)  \ar[urr]_{ch_{deloc}  }\ar[u]^{e^*} &&}
\na
commutes.
\end{proposition}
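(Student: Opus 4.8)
The plan is to build $ch_\Phi$ directly from the eigen-bundle description of the delocalized Chern character, using the tautological automorphism that every vector bundle over $\tilde\cG$ carries. Recall from the discussion preceding Definition \ref{log} that if $E$ is a vector bundle over $\tilde\cG$, then the fiber $E_g$ at $g\in\tilde\cG_0$ carries the linear automorphism induced by the action of $g$ itself; call this automorphism $\Psi_E$, and note that when $E=e^*F$ for a bundle $F$ over $\cG$, this $\Psi_E$ is precisely the automorphism $\Phi$ used to define $ch_{deloc}$. Choosing a $\tilde\cG$-invariant Hermitian metric, decompose $E=\bigoplus_{\theta\in\Q\cap[0,1)}E(\theta)$ into eigen-bundles of $\Psi_E$, and set
\[
ch_\Phi(E)=\sum_\theta e^{2\pi\sqrt{-1}\,\theta}\,ch(E(\theta))\in H^{ev}(\tilde\cG,\C).
\]
Extend to $K^1$ by the usual de-suspension, and to virtual bundles by linearity; this is well-defined since the eigen-bundle decomposition is independent of the chosen invariant metric up to isomorphism. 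Morita invariance is inherited from that of the ordinary Chern character and of the construction of $\tilde\cG$.

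The commutativity of \eqref{diagram:com} is immediate from the construction: for a bundle $F$ over $\cG$, the automorphism $\Psi_{e^*F}$ coincides with the automorphism $\Phi$ appearing in the definition of $ch_{deloc}(F)$, so $ch_\Phi(e^*F)$ and $ch_{deloc}(F)$ are computed from literally the same eigen-bundle decomposition and hence are equal in $H^*(\tilde\cG,\C)\cong H^*_{orb}(\tilde\XX,\C)$.

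The only substantive point is that $ch_\Phi$ is a \emph{ring} homomorphism, i.e.\ $ch_\Phi(E\otimes F)=ch_\Phi(E)\cup ch_\Phi(F)$. First I would check that $\Psi$ is multiplicative under tensor product: for bundles $E,F$ over $\tilde\cG$, the action of $g\in\tilde\cG_0$ on $(E\otimes F)_g$ is the tensor product of its action on $E_g$ and on $F_g$, so $\Psi_{E\otimes F}=\Psi_E\otimes\Psi_F$. Consequently, with eigen-decompositions $E=\bigoplus_\theta E(\theta)$ and $F=\bigoplus_\zeta F(\zeta)$, one gets $e^*(E\otimes F)=\bigoplus_{\theta,\zeta}E(\theta)\otimes F(\zeta)$, where $\Psi$ acts on the $(\theta,\zeta)$ summand by $e^{2\pi\sqrt{-1}(\theta+\zeta)}$ — the eigenvalue label being $\theta+\zeta$ reduced mod $1$. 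Since $ch(E(\theta)\otimes F(\zeta))=ch(E(\theta))\cup ch(F(\zeta))$ and the phase factors satisfy $e^{2\pi\sqrt{-1}(\theta+\zeta)}=e^{2\pi\sqrt{-1}\theta}\cdot e^{2\pi\sqrt{-1}\zeta}$ irrespective of the mod-$1$ reduction, summing over all pairs $(\theta,\zeta)$ factors exactly as $ch_\Phi(E)\cup ch_\Phi(F)$. This is the same computation already carried out in the proof of Proposition \ref{deloc:iso} for $ch_{deloc}$, now applied one level up on $\tilde\cG$ with the tautological automorphism in place of $\Phi$; the mild subtlety is keeping track that the mod-$1$ ambiguity in the eigenvalue labels is harmless because only the exponentiated phase enters the formula. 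The odd case follows by the standard suspension argument, exactly as in Proposition \ref{deloc:iso}. I expect the bookkeeping around the eigenvalue labels modulo $1$ under iterated tensor products to be the only place requiring care, and it is routine.
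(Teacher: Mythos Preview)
Your proposal is correct and follows essentially the same approach as the paper: both define $ch_\Phi$ via the tautological automorphism that $g\in\tilde\cG_0$ induces on the fiber $E_g$, perform the eigen-bundle decomposition, and sum the ordinary Chern characters weighted by the phase $e^{2\pi\sqrt{-1}\theta}$; commutativity of the triangle is then immediate since $\Psi_{e^*F}=\Phi$. Your write-up is in fact more thorough than the paper's, which does not spell out the multiplicativity argument or the mod-$1$ bookkeeping for tensor products (it relies implicitly on the identical computation done earlier for $ch_{deloc}$ in Proposition~\ref{deloc:iso}).
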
 


\begin{proof} 
Any  complex  orbifold vector bundle $\cE$ over the inertia orbifold $\tilde\XX$ admits an automorphism $\Phi$.  In terms of the associated complex vector bundle $E$  over the canonical
proper \'etale groupoid $\tilde\cG$, the action of $\Phi$ on  each fiber $E_g$ over the point
$g\in \tilde \cG_0$  is given by the action of $g$.   We  have an  eigen-bundle decomposition
of $\cE$
\[
\cE = \bigoplus_{\theta \in \Q\cap [0, 1) } \cE_\theta
\]
where  $\cE_\theta$ is a complex vector bundle over $\tilde \cG$,  on which $\Phi$ acts  as multiplication by $e^{2\pi \sqrt{-1} \theta}$.   Then 
\[
ch_\Phi (\cE) = \sum_\theta e^{2\pi \sqrt{-1} \theta} ch(\cE_\theta) 
\in   H^{ev} (\tilde \cG, \C)\cong H^{ev} (\tilde \XX, \C)
 \]
defines a homomorphism
\[
ch_\Phi:  K^0_{orb} (\tilde \XX) \longrightarrow H^{ev} (\tilde \XX, \C). 
\]
 From the definition of the delocalized Chern character, we get $ch_{deloc} = ch_\Phi \circ e^*$. Hence, the diagram (\ref{diagram:com})  commutes. 
\end{proof}

Proposition \ref{ch:Phi}  implies the following commutative diagram
 \ba\label{diagram:triangle}
  \xymatrix{
K^*_{orb} (\tilde \XX) \otimes \C   \ar[rr]^{ch_\Phi}&  & H^* (\tilde \XX, \C) \\
K^*_{orb}  ( \XX) \otimes \C  \ar[urr]_{ch_{deloc}  }\ar[u]^{e^*} &&}
\na
where $ch_{deloc}$ is an isomorphism. 

For any element $\tilde\a  \in Im (e^*) \cap  Ker (ch_\Phi)$,  
there exists $\a\in K^*_{orb} (\tilde \XX) \otimes \C$ such that $\tilde \a = e^*(\a)$ and $ch_\Phi(\tilde \a) =0$.
These   imply  that $ch_{deloc}(\a) =0$. Hence $\a =0$  as  $ch_{deloc}$ is an isomorphism.
Then $\tilde \a  =  e^*(\a) = e^*(0) =0$.   Then we obtain
\ba\label{ker:Phi}
K^*_{orb} (\tilde \XX) \otimes \C   \cong Im (e^*) \oplus  Ker (ch_\Phi). 
\na
Hence, each element  $\tilde \a \in K^*_{orb} (\tilde \XX) \otimes_\Z \C$ can be uniquely written as
\ba\label{+}
\tilde \a  = e^*\a + \b 
\na
for  a unique element $\a \in K^*_{orb}  ( \XX) \otimes_\Z \C $ and $\b \in Ker (ch_\Phi)$.  Define
\ba\label{left:inverse:-1}
e_\#:   K^*_{orb} (\tilde \XX) \otimes_\Z \C \longrightarrow   K^*_{orb}  ( \XX) \otimes_\Z \C, 
\na 
sending  $\tilde \a  = e^*\a + \b$ as in (\ref{+})  to  $\a$. Then one can check that
$e_\#$ is  a left inverse of 
$e^*$ such that 
\ba\label{left:inverse}
\big( ch_{deloc} \circ e_\#  \big) (\tilde \a) = ch_\Phi (\tilde \a)
\na
for any $\tilde \a \in  K^*_{orb} (\tilde \XX)\otimes_\Z\C$.   

With these preparations, we now can define the stringy product on the orbifold K-theory
of a compact almost complex orbifold $\XX$. For simplicity, we will denote  $K_{orb}^*(\XX)\otimes_\Z \C$ and $K_{orb}^*(\tilde \XX)\otimes_\Z \C$ by  $K_{orb}^*(\XX, \C)$ and $K_{orb}^*(\tilde \XX,  \C)$ respectively.

\begin{definition} \label{stringy:orb}
Let $\XX$ be an almost complex compact orbifold,  and $\tilde \XX =\bigsqcup_{(g)} \XX_{(g)}$  be its inertia orbifold.    The stringy product  $\circ$  on  $K_{orb}^*(\XX, \C) $ is defined by
\[
\a_1 \circ \a_2 = e_\# (e^* \a_1 \bullet_{ARZ} e^* \a_2)
\]
for $\a_1, \a_2 \in K_{orb}^*(\XX, \C)$. Here $e^* \a_1 \bullet_{ARZ} e^* \a_2$ is the
Adem-Ruan-Zhang stringy product on $K_{orb}^*(\tilde \XX,  \C)$ and
$e_\#$ is the left inverse of $e^*$. 
\end{definition}
 
Next, we define a modified version of the delocalized Chern character 
\[
\widetilde{ch}_{deloc}:  K^*_{orb}(\XX) \longrightarrow H^* (\tilde\XX, \C).
\]
 For a complex vector bundle $\cE$  over  a  compact  manifold $M$, there is a well-known 
  Chern character defect for   Thom isomorphisms  in K-theory and cohomology theory, for example  see
 Chapter III.12 in \cite{LM}, such that the following diagram 
 \[
 \xymatrix{
 K^*(M, \C) \ar[rr]^{Thom}_{\cong} \ar[d]_{ch(-)\wedge \T(\cE) } && K_c^*(\cE, \C)  \ar[d]_{ch} \\
 H^*(M, \C) \ar[rr]^{Thom}_{\cong} && H_c^*(\cE, \C )  }
 \]
commutes.    Here $\T(\cE) $  is a  characteristic class of $\cE$
\[
\T(\cE) =  \dfrac{ch \big(\lambda_{-1} (\cE) \big)}{e (\cE)}  \in H^*(M, \C),  
 \] 
associated to the formal power series 
\[
\T(x) = \dfrac{1-e^x}{x}. 
\]
That is,  if we formally split the total Chern class as $c(\cE) =\prod (1+x_j)$, then
\[
\T(\cE)  =  \prod_{i} \dfrac{1-e^{x_i}}{x_i}. 
\]

 Given an orbifold  complex vector bundle over   the
 inertia  orbifold $\tilde \XX$, let 
 \[
 \cE=\bigoplus_{m_i \in \Q\cap [0, 1)} \cE_{m_i}
 \]
 be  the eigen-bundle decomposition of the canonical automorphism  $\Phi$, 
 where $\Phi$ acts on $\cE_{m_i}$ as  multiplication by  $e^{2\pi\sqrt{-1} m_i}$.  Define 
 a cohomology class
    \ba\label{defect}
 \T (\cE, \Phi) =  \prod_{m_i} \T (\cE_{m_i}) ^{m_i} \in H^{*} (\tilde\XX, \C), \na
 where $ \T (\cE_{m_i}) ^{m_i}$ is the  characteristic class
associated to the formal power series 
$ \big( \dfrac{1-e^x}{x} \big)^m. $ Then $ \T (\cE, \Phi)$ is an invertible element  in $H^{*} (\tilde\XX, \C)$ as the degree zero component is 1.
 
 Associated to the normal bundle 
\[
\cN_{e} = \bigsqcup_{(g)\in \cT_1} \cN_{(g)} 
\]
over  $\tilde \XX =\bigsqcup_{(g)} \XX_{(g)}$, we have the cohomology class
$
 \T  (\cN_e, \Phi) $ in 
 $H^{*} (\tilde \XX)$ whose component in $ H^*(\XX_{(g)}, \C)$ is given by $\T (\cN_{(g)}, \Phi)$. 
 The intrinsic description of the obstruction bundles over $\tilde\XX^{[2]}$  in Theorem \ref{theorem:2}  gives rise to the following identity
  \ba
  \begin{array}{lll} \label{key:id}
&&  \T(E^{2]}_{( g_1, g_2)} \oplus \cN_{(g_1, g_2)})  \wedge e_{12}^*  \T (\cN_{(g_1g_2)}, \Phi) \\
 & = &   e_1^*  \T (\cN_{(g_1)}, \Phi) \wedge e_2^*  \T (\cN_{(g_2)}, \Phi)
 \end{array}  \na
  in $H^*(\XX_{(g_1, g_2)}, \C)$ for any connected component $\XX_{(g_1, g_2)}$ of $\tilde\XX_{[2]}$.  Here $\cN_{(g_1, g_2)}$ is the normal bundle for  the orbifold embedding
  $e_{12}: \XX_{(g_1, g_2)} \to \XX_{(g_1g_2)}$. 
 
 \begin{definition} \label{modif:chern}   The modified delocalized Chern character on the orbifold K-theory $K^*_{orb}(\XX) \cong K^*(\cG)$ is defined to be 
 \[
\widetilde{ ch} _{deloc} =  \T (\cN_e, \Phi) \wedge ch_{deloc}:  K^*_{orb}(\XX)  \longrightarrow     H^*(\tilde \XX, \C)
\]
 \end{definition}

 Now we can prove the main result of this paper. 

\begin{theorem}  \label{ring:iso}
Let $\XX$ be a compact,  almost complex, effective orbifold. The  modified delocalized  Chern character
\[
\widetilde{ch}_{deloc}:  (  K^*_{orb}(\XX,   \C) , \circ )    \longrightarrow  (H^* (\tilde \XX, \C), *_{CR}), 
\]
is a ring isomorphism between two $\Z_2$-graded multiplicative cohomology theories. 
\end{theorem}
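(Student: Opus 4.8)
The plan is to factor $\widetilde{ch}_{deloc}$ through the triangle (\ref{diagram:triangle}) and to reduce the assertion to a multiplicativity statement for $ch_\Phi$, which is then settled by the intrinsic obstruction-bundle identity of Theorem \ref{theorem:2}, repackaged as (\ref{key:id}). To begin, $\widetilde{ch}_{deloc}$ is a linear isomorphism over $\C$: $ch_{deloc}$ is a linear isomorphism by Proposition \ref{deloc:iso}, and $\T(\cN_e,\Phi)$ is an invertible element of $H^*(\tilde\XX,\C)$, its degree-zero component being $1$, so wedging with it is a linear automorphism of $H^*(\tilde\XX,\C)$; hence so is $\widetilde{ch}_{deloc}=\T(\cN_e,\Phi)\wedge ch_{deloc}$. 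Since $ch_{deloc}$ and $\T(\cN_e,\Phi)$ are even and the products $\circ$ and $*_{CR}$ respect the decompositions over $\cT_1$ (cf. (\ref{ARZ:decomp}) and Definition \ref{CR}), the map also preserves the $\Z_2$-grading, so only multiplicativity remains.

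For multiplicativity, set $\widetilde{ch}_\Phi:=\T(\cN_e,\Phi)\wedge ch_\Phi:K^*_{orb}(\tilde\XX,\C)\to H^*(\tilde\XX,\C)$. Using $ch_{deloc}=ch_\Phi\circ e^*$ (Proposition \ref{ch:Phi}) and $ch_{deloc}\circ e_\#=ch_\Phi$ (from (\ref{left:inverse})), for $\a_1,\a_2\in K^*_{orb}(\XX,\C)$ one has
\[
\widetilde{ch}_{deloc}(\a_1\circ\a_2)=\T(\cN_e,\Phi)\wedge ch_{deloc}\bigl(e_\#(e^*\a_1\bullet_{ARZ}e^*\a_2)\bigr)=\widetilde{ch}_\Phi\bigl(e^*\a_1\bullet_{ARZ}e^*\a_2\bigr),
\]
while $\widetilde{ch}_{deloc}(\a_i)=\T(\cN_e,\Phi)\wedge ch_\Phi(e^*\a_i)=\widetilde{ch}_\Phi(e^*\a_i)$. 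Hence it suffices to prove that $\widetilde{ch}_\Phi$ intertwines $\bullet_{ARZ}$ with $*_{CR}$ on all of $K^*_{orb}(\tilde\XX,\C)$, i.e. is the general-orbifold analogue of the modified Chern character of Becerra--Uribe and Jarvis--Kaufmann--Kimura.

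The core step is therefore to verify, for $\tilde\a_1,\tilde\a_2\in K^*_{orb}(\tilde\XX,\C)$,
\[
\widetilde{ch}_\Phi(\tilde\a_1\bullet_{ARZ}\tilde\a_2)=\widetilde{ch}_\Phi(\tilde\a_1)*_{CR}\widetilde{ch}_\Phi(\tilde\a_2).
\]
I would expand both sides in their component form over each $\XX_{(g_1,g_2)}\subset\tilde\XX^{[2]}$. On the left, $\tilde\a_1\bullet_{ARZ}\tilde\a_2=(e_{12})_*\bigl(e_1^*\tilde\a_1\cdot e_2^*\tilde\a_2\cdot\lambda_{-1}(E^{[2]})\bigr)$, where $(e_{12})_*$ is the K-theoretic Thom isomorphism for $\cN_{e_{12}}$ followed by extension by zero; on the right $*_{CR}$ has the same shape with $\lambda_{-1}(E^{[2]})$ replaced by the cohomological Euler class $e(E^{[2]})$ and the cohomological Thom isomorphism in place of the K-theoretic one. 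Applying the delocalized Chern character, tracking how the canonical automorphism of $\tilde\XX$ restricts along $e_1,e_2,e_{12}$ to the automorphisms $\Phi_1,\Phi_2,\Phi_{12}$ of Section 3, and invoking multiplicativity of $ch$ over tensor products, the Chern-character defect $\T(\cN_{e_{12}})$ of the Thom isomorphism (the commuting square recalled in Section \ref{section:stringy}), the projection formula, and $ch(\lambda_{-1}(E^{[2]}))=\T(E^{[2]})\cdot e(E^{[2]})$, one is left with an equality of characteristic classes on $\XX_{(g_1,g_2)}$ in which the factors $e_1^*\T(\cN_{(g_1)},\Phi)$ and $e_2^*\T(\cN_{(g_2)},\Phi)$ coming from $\widetilde{ch}_\Phi(\tilde\a_1)$ and $\widetilde{ch}_\Phi(\tilde\a_2)$, together with $\T(\cN_{e_{12}})$ and $\T(E^{[2]})$, must reorganize into $e_{12}^*\T(\cN_{(g_1g_2)},\Phi)$ times the residual Euler class $e(E^{[2]})$ — which is exactly the identity (\ref{key:id}) deduced from Theorem \ref{theorem:2}. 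I expect this bookkeeping to be the main obstacle: keeping straight which of the three commuting automorphisms and which fractional $\T$-class sits on which sector, and checking that (\ref{key:id}) precisely bridges the K-theoretic and cohomological Euler classes of $E^{[2]}$. Should a direct comparison of pushforwards prove awkward, I would instead pair both sides against an arbitrary compactly supported $\omega_3\in H^*(\XX_{(g_3)},\C)$ with $g_3=(g_1g_2)^{-1}$, use the $3$-point-function description (\ref{3-point}) of $*_{CR}$ together with orbifold Poincar\'e duality to replace the K-theoretic pushforward by an orbifold integral computed via the equivariant Kawasaki index formula of Remark \ref{equ:Kar}, and then apply (\ref{key:id}) and Lemma \ref{lemma:identity} — the very strategy used for Theorem \ref{deRham}.

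Finally, $\widetilde{ch}_{deloc}$ is then a bijective ring homomorphism onto $(H^*(\tilde\XX,\C),*_{CR})$. Since $*_{CR}$ is associative by the gluing property of the obstruction bundles of \cite{CR}, transport of structure along this bijection shows that $\circ$ is associative, so $\widetilde{ch}_{deloc}$ is a ring isomorphism; its compatibility with pullbacks along orbifold morphisms and with the $\Z_2$-grading exhibits it as an isomorphism of $\Z_2$-graded multiplicative cohomology theories. Identifying $H^*(\tilde\XX,\C)$ with $H^*_{CR}(\XX,\C)$ via the degree shift of Definition \ref{CR} then yields Theorem \ref{main:1}.
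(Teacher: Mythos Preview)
Your proposal is correct and follows essentially the same route as the paper: reduce via the triangle (\ref{diagram:triangle}) and the identity $ch_{deloc}\circ e_\#=ch_\Phi$ to computing $\T(\cN_e,\Phi)\wedge ch_\Phi$ on the Adem--Ruan--Zhang product, then unravel the K-theoretic pushforward $(e_{12})_*$ using the triviality of the canonical automorphism on the Thom class of $\cN_{e_{12}}$, the defect $\T(\cN_{e_{12}})$, the relation $ch(\lambda_{-1}(E^{[2]}))=\T(E^{[2]})\cdot e(E^{[2]})$, and finally the identity (\ref{key:id}); the isomorphism part follows from Proposition \ref{deloc:iso} and invertibility of $\T(\cN_e,\Phi)$.

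One small remark: you phrase the multiplicativity step as establishing that $\widetilde{ch}_\Phi$ intertwines $\bullet_{ARZ}$ with $*_{CR}$ on \emph{all} of $K^*_{orb}(\tilde\XX,\C)$, whereas the paper only carries the computation through for classes of the form $e^*\a_1,\,e^*\a_2$. The latter is all that is needed, and is what the chain of equalities in the paper actually justifies (the step where $ch_\Phi$ of the pushforward produces $e_1^*ch_\Phi(e_{(g_1)}^*\a_1)$ uses that the relevant automorphism on the factor coming from $\a_1$ is the one inherited from $\XX_{(g_1)}$). Your fallback via the $3$-point function and Lemma \ref{lemma:identity} is not invoked in the paper's proof of this theorem; it is the argument used for Theorem \ref{deRham}.
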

    \begin{proof}   By  the  definition of stringy product (Definition \ref{stringy:orb}) and  (\ref{ker:Phi})-(\ref{left:inverse}), we get 
\ba
\label{deloc:1}
\begin{array}{lll}
&&  ch_{deloc} (\a_1\circ \a_2)  \\
&=&( ch_{deloc} \circ e_\# ) \big( e^* \a_1 \bullet_{ARZ}  e^*\a_2 \big) \\
&=&  ch_\Phi \big( e^* \a_1 \bullet_{ARZ}  e^*\a_2 \big)  \\
& = &    ch_\Phi \big[ (e_{12})_*  \big( e_1^* e^* \a_1 \cdot   e_2^* e^*\a_2 \cdot    \lambda_{-1}(E^{[2]})  \big) \big]
 \end{array} 
\na
for $\a_1, \a_2 \in K^*_{orb}(\XX,   \C)$.  As $e^* =\oplus_{(g)} e_{(g)}^*$, the $\XX_{(g)}$- component of  $ch_{deloc} (\a_1\circ \a_2)$ in  (\ref{deloc:1})  is given by
\ba\label{deloc:2}
\sum_{\{ (g_1, g_2) : (g_1g_2) =  (g)\}}  ch_\Phi \big[ (e_{12})_*  \big( e_1^* e_{(g_1)}^* \a_1 \cdot   e_2^* e_{(g_2)}^*\a_2 \cdot    \lambda_{-1}(E_{(g_1, g_2)}^{[2]})  \big) \big].
\na
Here the pushforward map $(e_{12})_*: K_{orb}(\XX_{(g_1, g_2)}, \C) \to
K_{orb}(\XX_{(g_1 g_2)}, \C)$ is obtained by the composition of the Thom isomorphism for the normal bundle $\cN_{(g_1, g_2)}$ of $e_{12}: \XX_{(g_1, g_2)} \to \XX_{(g_1g_2)}$ and the natural extension for open embeddings. 
Using   the fact  that the automorphism on the Thom class of  $\cN_{(g_1, g_2)}$ is trivial, we   obtain
\[
\begin{array}{lll}
&& \widetilde{ch}_{deloc}  (\a_1\circ \a_2)   \\[2mm]
&=&  ch_{deloc} (\a_1\circ \a_2)  \wedge \T(\cN_e, \Phi)\\[2mm]
&=& \sum_{\{ (g_1, g_2)  \}}  ch_\Phi \big[ (e_{12})_*  \big( e_1^* e_{(g_1)}^* \a_1 \cdot   e_2^* e_{(g_2)}^*\a_2 \cdot    \lambda_{-1}(E_{(g_1, g_2)}^{[2]}  ) \big)  \big]   \wedge \T(\cN_{(g_1 g_2)}, \Phi) \\[2mm]
&=&  \sum_{\{ (g_1, g_2)  \}}  (e_{12})_*  \big[    e_1^* ch_\Phi  ( e_{(g_1)}^* \a_1 )\wedge   e_2^*
ch_\Phi (e_{(g_2)}^*\a_2)  \wedge  ch \big( \lambda_{-1}(E_{(g_1, g_2)}^{[2]}  \big) \\[2mm]
&&  \wedge 
\T (\cN_{(g_1, g_2)} ) \big] \wedge  \T(\cN_{(g_1 g_2)}, \Phi) \\[2mm]
&=& \sum_{\{ (g_1, g_2)  \}}  (e_{12})_*  \big[    e_1^* ch_\Phi  ( e_{(g_1)}^* \a_1 )\wedge   e_2^*  ch_\Phi (e_{(g_2)}^*\a_2)  \wedge e(E_{(g_1, g_2)}^{[2]}) \\[2mm]
&&  \wedge     \T \big(E_{(g_1, g_2)}^{[2]}\oplus\cN_{(g_1, g_2)}   \big) \wedge e_{12}^*\T(\cN_{(g_1 g_2)}, \Phi)  \big]\\[2mm]
&=&  \sum_{\{ (g_1, g_2)  \}}  (e_{12})_*  \big[    e_1^* ch_\Phi  ( e_{(g_1)}^* \a_1 )\wedge   e_2^*  ch_\Phi (e_{(g_2)}^*\a_2)  \wedge e(E_{(g_1, g_2)}^{[2]}) \\[2mm]
&& e_1^* \T (\cN_{(g_1)}, \Phi) \wedge e_2^*  \T (\cN_{(g_2)}, \Phi) \big] \\[2mm]
&=&   (e_{12})_*  \big[     e_1^* \widetilde{ch}_{deloc}  (   \a_1 )\wedge   e_2^*   \widetilde{ch}_{deloc} ( \a_2)  \wedge e(E_{(g_1, g_2)}^{[2]}) \\[2mm]
&=&  \widetilde{ch}_{deloc}  (   \a_1 ) *_{CR}  \widetilde{ch}_{deloc}  (   \a_2 ).
\end{array} 
\]
Here we apply the identity (\ref{key:id}). 
This implies that  $ \widetilde{ch}_{deloc} $ is a ring homomorphism.

From Proposition \ref{deloc:iso}, we know that $ch_{deloc}:  K^*_{orb}(\XX) \otimes \C  \to   H^*_{CR} (\XX,  \C)$ is an isomorphism of  complex vector spaces.  As the  degree zero  component of $\T(\cN_e, \Phi)$
is $1$, $\T(\cN_e, \Phi)$ is an invertible element of the ring $H^*_{CR} (\XX,  \C)$. So $ \widetilde{ch}_{deloc} $ is also  an isomorphism of  complex vector spaces.   Hence  $ \widetilde{ch}_{deloc} $ is a ring
isomorphism.

    \end{proof}
     
\begin{remark} \label{simple:def} Note that $K^*_{orb} (\tilde \XX)  =\bigoplus_{(g)}K^*_{orb}(\XX_{(g)})$, $
H^*(\tilde \XX, \C) = \bigoplus_{(g)}H^* (\XX_{(g)}, \C)$, and 
\[
ch_\Phi: \bigoplus_{(g)}K^*_{orb}(\XX_{(g)}, \C) \longrightarrow \bigoplus_{(g)}H^* (\XX_{(g)}, \C)
\]
preserves the decompositions.   This decomposition preserving homomorphism motivates the following alternative definition of stringy product which might  simplify computation when applying Theorem \ref{ring:iso}.

The isomorphism $ch_{deloc}: K^*_{orb}(\XX,  \C)   \longrightarrow  \bigoplus_{(g)}  H^{*}_{orb}(  \XX_{(g)} , \C) $ induces the decomposition 
  \ba\label{decom:0}
  K^*_{orb}(\XX,   \C) =  \bigoplus_{(g)}  K^{*}_{orb}\big(  \XX, (g) \big)
  \na
  such that $ch_{deloc}$ preserves the decomposition. 
  Then the commutative traingle (\ref{diagram:triangle}) immediately implies the following identities
  \ba\label{decom}
  ch_{\Phi}  (e_{(h)}^* \omega_{(g)}  )  = \left\{\begin{array}{lll}
   ch_{deloc}  (  \omega_{(g)}  ) &\qquad & \text{if $(g) =(h)$} \\
   0   &\qquad & \text{otherwise } \end{array}\right.
\na
for any $\omega_{(g)} \in  K^{*}_{orb}\big(  \XX, (g) \big)$,  and  where $e_{(h)}: \XX_{(h)} \to \XX$ denotes  the orbifold embedding. 
Define 
\[
\bar{e}^* =\bigoplus_{(g)} e^*_{(g)}:  K^*_{orb}  ( \XX,  \C) = \bigoplus_{(g)}  K^{*}_{orb}\big(  \XX, (g) \big) \longrightarrow
K^*_{orb} (\tilde \XX,  \C)  =\bigoplus_{(g)}K^*_{orb}(\XX_{(g)},  \C)
\]
by sending $\a_{(g)} \in K^{*}_{orb}\big(  \XX, (g) \big)$ to $e_{(g)}^*\a_{(g)} \in K^*_{orb}(\XX_{(g)},  \C)$.   
It is straightforward to check  that $\bar{e}^*$ is a  ring homomorphism preserving the decomposition. Moreover,
  the homomorphism $\bar{e}^*$ satisfies 
\[
ch_{deloc} = ch_\Phi \circ \bar{e}^*
\]
and  has a canonical   left inverse $\bar{e}_\#$ defined in a  similar way as     in (\ref{+})  - (\ref{left:inverse}).  We can check that the stringy product in Definition \ref{stringy:orb} can be written as 
\ba\label{new:def}
\a_1 \circ \a_2 = \bar{e}_\# (\bar{e}^* \a_1 \bullet_{ARZ} \bar{e}^* \a_2)
\na
for $\a_1, \a_2 \in K_{orb}^*(\XX, \C)$.   In particular, if $\a_1 \in K_{orb}^*\big(\XX, (g_1) \big)$ and
$\a_2 \in K_{orb}^*\big(\XX, (g_2) \big)$, then
\[
\a_1 \circ \a_2 \in K_{orb}^*\big(\XX, (g_1g_2) \big).
\]
  \end{remark}

  \begin{example}  \label{1} 
The first  example   is the  orbifold  $[\{pt\}/G]$ given by a point $\{pt\}$
  with a trivial action of a finite group $G$. 
   The   groupoid $\cG$ associated to   $[\{pt\}/G]$ is the action groupoid $G\rightrightarrows \{pt\}$ with   the conjugation action groupoid $G \times G \rightrightarrows   G$ as  its  inertia groupoid $\tilde \cG$.

  The Chen-Ruan cohomology ring of  $[\{pt\}/G]$  is isomorphic to the center  $Z(\C[G]) $ of the group algebra $\C[G]$, see \cite{CR} or  \cite{ALR}.  Alternatively,   $H^*(\tilde \cG, \C)$  
 can be identified with the  vector space of $\C$-valued $G$-conjugation invariant functions (also called class functions) on $G$.   Let $\cC(G)$ be the vector space of $\C$-valued class functions on $G$.  The ordinary cup product on  $H^*(\tilde \cG, \C)$ is the point-wise multiplication on $\cC(G)$.  As complex vector spaces,   $\cC(G)$     is  isomorphic to the center  $Z(\C[G]) $ of the group algebra $\C[G]$.  The Chen-Ruan product  on   $Z(\C[G]) $  induces the  convolution product on $\cC(G)$
  \ba\label{convol}
( \chi_1 * \chi_2 )(g) =\sum_{\{g_1, g_2 \in G: g_1g_2 = g\}}  \chi_1(g_1) \chi(g_2)
\na
for $\chi_1, \chi_2 \in \cC(G)$, and $g\in G$. This means
\[
\big(H^*_{CR}( [\{pt\}/G], \C), *_{CR}\big) \cong  (\cC(G), *).
\]
  
The stringy product  on   the  orbifold K-theory of the inertia orbifold  was described in \cite{ARZ}.  Here we consider the stringy  product on $K_G(\{pt\})$, the  orbifold K-theory of the
orbifold  $[\{pt\}/G]$ itself. 
Note that 
\[
K_G(\{pt\}) \cong R(G), 
\]
 the representation ring of $G$.  The odd equivariant K-theory of a point vanishes.
Let   $  \rho_V: G\to GL(V)$  be a representation  of $G$. Then from  the definition,  
 $ch_{deloc} ([V, \rho_V])$ is a   class function 
  on $G$ given by the character of the representation $\rho_V$
\[
ch_{deloc} ([V, \rho])  (g) = Tr \big(\rho_V(g)\big), \qquad \text{for} \qquad g\in G.
\]
that is, $ch_{deloc}: R(G) \to \C$ is a $\C$-valued class function on $G$.   Then 
\[
ch_{deloc}:  R(G)\otimes \C  \longrightarrow \cC(G)
\]
a ring isomorphism, where the ring structure on $R(G)\otimes \C $ is given by the  tensor product   of representations, and the ring  structure on   $\cC(G)$ is the standard point-wise multiplication.

Let $  \rho_V: G\to GL(V)$ and
$  \rho_W: G\to GL(W)$ be two representations of $G$.  We compute the Adem-Ruan-Zhang
product  
\[
e^* [V, \rho_V] \bullet_{ARZ}  e^*[W, \rho_W] = (e_{12})_* \big(
e_1^*e^* [V, \rho_V] \otimes  e_2^*e^*[W, \rho_W]  \big)
\]
represented by a $G$-equivariant vector bundle over $G$, since  there are no normal bundles  involved in this case.  As a vector bundle over $\tilde \cG$, its  fiber over $g\in G$ is 
\[
  \bigoplus_{\{g_1, g_2 \in G: g_1g_2 = g\}} V_{g_1} \otimes W_{g_2}
\]
where $V_{g_1} = V$ and $W_{g_2} = W$. The  automorphism $\Phi$ on the fiber over $g$   is given by 
\[
 \bigoplus_{\{g_1, g_2 \in G: g_1g_2 = g\}} \rho_V(g_1) \otimes 
\rho_W(g_2).
\] 
By a direct calculation, we get
\[
ch_\Phi (e^* [V, \rho_V] \bullet_{ARZ}  e^*[W, \rho_W] ) = ch_{deloc} ([V, \rho_V]) * ch_{deloc} ([W, \rho_W])
\]
where $*$ is the convolution product on $\cC(G)$. 
Therefore, the stringy product on $K_{orb}([\{pt\}/G], \C)$ agrees with  the convolution product on $\cC (G)$ under the delocalized Chern character.  Therefore,  we get
the ring  isomorphism
\[
 \big(K_{orb}^*([\{pt\}/G]) \otimes \C, \circ \big) \cong   \big(H^*_{CR}([\{pt\}/G], \C), *_{CR}\big).
\]

In summary, for the orbifold $[\{pt\}/G]$, the orbifold K-theory $K_{orb}^*( [\{pt\}/G]) \cong R(G)$ is isomorphic to the singular  cohomology of its inertia orbifold  over the complex coefficients under
the delocalized Chern character. There are two products
on $K_{orb}^*( [\{pt\}/G], \C)$. 
\begin{enumerate}
\item One is given by the tensor product, under the delocalized Chern character, corresponding to the point-wise product on $\cC(G)$. 
\item The other is the stringy product,  under the delocalized Chern character, corresponding to the  convolution product on $\cC(G)$. 
\end{enumerate}

  \end{example}

\begin{example} The orbifold is $[G/G]$,  obtained by the conjugation action of a finite group $G$ on itself.
The groupoid  $\cG$ for $[G/G]$ is the   action groupoid of the $G$-conjugation action on $G$. Its inertia
groupoid $\tilde \cG$  is the   action groupoid of the  $G$-conjugation action on
\[
\tilde G= \{(g, h)\in G\times G|  gh = hg\}, 
\]
  the set of pairs of commuting elements in $G$. 
  
   Additively,   $K_{orb}^*([G/G])$,  the orbifold K-theory of $[G/G]$,   is isomorphic to 
   \[
   K_G(G)\cong 
   \bigoplus_{\{(g): \text{conjugacy classes of $G$}\}} R(Z_G(g)),
   \]
   where $Z_G(g)$ is the centralizer of $g$ in $G$, see \cite{ARZ}.  An element in $K_G(G)$ is
   represented by a collection of finite dimensional complex vector spaces 
   $ \{(V_g, \rho_g)\}_ {g\in G}$,  where  $\rho_g:  Z_G(g) \to GL(V_g)$ is a group homomorphism, such that 
   there is a linear $G$-action on $\oplus_g V_g$ intertwining with the $\oplus \rho_g$-representation in the following sense: 
   \begin{enumerate}
\item  for any $k\in G$, there is a linear isomorphism $\phi_V(k):  V_g \to V_{kgk^{-1}}$ satisfying  $\phi_V(k_1k_2)= \phi_V(k_1)\circ \phi_V(k_2)$ for $k_1, k_2 \in G$, 
 \item $\phi_V(h) = \rho_g(h)$ for  $h\in Z_G(g)$, 
 \item   the following diagram commutes  
   \ba\label{intertwining}  \xymatrix{
   V_g \ar[rr]^-{\rho_g (h)} \ar[d]_{\phi_V(k)}^\cong&& V_g\ar[d]^{\phi_V(k)}_\cong\\
   V_{kgk^{-1} } \ar[rr]_-{\rho_{kgk^{-1}}(khk^{-1})} && V_{kgk^{-1}}
 }\na
for any $h\in Z_G(g)$. 
\end{enumerate}
The diagram (\ref{intertwining})    implies that for $(g, h) \in \tilde G$
 \[
 Tr\big(\rho_g(h)\big) = Tr \big(\rho_{kgk^{-1}}(khk^{-1}\big).
 \]
 Hence  the function $\chi_{\{(V_g, \rho_g)\}_ {g\in G}}: (g, h) \mapsto  Tr\big(\rho_g(h)\big) $ is a $G$-invariant function
 on $\tilde G$.      Let $\cC_G(\tilde G)$ be the  
   space of $\C$-valued $G$-invariant functions on $\tilde G$. 
    Then
   the delocalized Chern character  
   \[
   ch_{deloc}: K_G(G) \longrightarrow \cC_G(\tilde G)
   \]
   is given by  sending $\{(V_g, \rho_g)\}_ {g\in G}$ to  $\chi_{\{(V_g, \rho_g)\}_ {g\in G}}$.

   There are three different products on $K_G(G) \otimes_\Z \C$ described as follows. 
  
  \begin{enumerate}
\item 
The first one is  given by the usual
 tensor product $\otimes_G$ of $G$-equivariant vector bundles over $G$.   The delocalized Chern character
  \[
  ch_{deloc}:  (K_G(G)   \otimes \C, \otimes_G)  \longrightarrow  (\cC_G(\tilde G), \cdot)
  \]
  is a ring isomorphism for the point-wise product $\cdot$  on $\cC_G(\tilde G)$.
  
 \item The second one is the Pontryagin product  $\bullet_G$ given by 
 \[\xymatrix{
\bullet_G:   K_G(G) \times K_G(G)  \ar[rr]^-{\pi_1^*\times \pi_2^*} &&
 K_G(G\times G) \ar[r]^-{m_*} & K_G(G),}
 \]
 where $\pi_1, \pi_2: G\times G \to G$ are the obvious projections and  $m: G\times G \to G$ is the group
 multiplication.     This product $\bullet_G$ agrees with the Adem-Ruan-Zhang
product $\bullet_{ARZ}$ on the orbifold K-theory of the inertia orbifold of $[\{pt\}/G]$. 
 The ring $(K_G(G), \bullet_G)$, or its complexification, is   the 
fusion  ring for the three-dimensional topological quantum field theory associated to the finite gauge group $G$, see \cite{Freed1} and \cite{Freed2}.

Explicitly,  given two elements 
 \[
 V = \oplus_{(g)} [V_{(g)}],  W = \oplus_{(g)} [W_{(g)}]  \]
 in $K_G(G) =  \oplus_{(g)} R(Z_G(g))$,  the Pontryagin product  is given by
  \[
 \big\{(V\bullet_G W)_g =  \bigoplus_{g_1, g_2\in G: g_1g_2 =g} V_{g_1}\otimes W_{g_2}\big\}_{g\in G}
 \]
 with the diagonal  $G$-action 
 \[
  \bigoplus \phi_V (k) \otimes \phi_W (k):
 (V\bullet_G W)_g \longrightarrow 
 (V\bullet_G W)_{kgk^{-1}}
  \]
 for any $k \in G$.   The delocalized Chern character
  \[
  ch_{deloc}:  K_G(G)   \otimes \C  \longrightarrow  \cC_G(\tilde G) 
  \]
  is a linear isomorphism of vector spaces.   For an abelian group $G$, 
  $\tilde G = G\times G$. Then the  Pontryagin product  $\bullet_G$  induces   the convolution product  on $\cC_G(\tilde G) $  in the first variable
  \[
  (f_1\ *_1  f_2 )(g, h) = \sum_{\{ g_1, g_2 \in  G:  g_1g_2 = g \}} f_1(g_1, h) f_2(g_2, h).
\]

 \item The third one is the stringy product  $\circ$ on the orbifold K-theory of the orbifold $[G/G]$ itself defined in Definition  \ref{stringy:orb}.  Given two elements 
 \[
 V = \oplus_{(g)} [V_{(g)}],  W = \oplus_{(g)} [W_{(g)}]  \]
 in $K_{orb}^*([G/G], \C)  \cong   \oplus_{(g)} R(Z_G(g)) \otimes \C$, 
 the stringy  product  on $[G/G]$ is given by
 \[
 V\circ W = \oplus_{(g)} [V_{(g)} \circ_{Z_G(g)} W_{(g)}],
 \]
 where $V_{(g)} \circ_{Z_G(g)} W_{(g)}$ is the stringy product on $K^*_{orb}([\{pt\}/Z_G(g)], \C)$ discussed in Example \ref{1}.
 
  The delocalized Chern character
  \[
  ch_{deloc}:  (K_G(G)   \otimes \C, \circ )  \longrightarrow  (\cC_G(\tilde G), *_2)
  \]
  is a ring isomorphism, where    the  product $*_2$  on $\cC_G(\tilde G)$ is given by
  \ba\label{hidden:product}
  (f_1*_2 f_2 )(g, h) = \sum_{\{ h_1, h_2 \in   Z_G(g):  h_1h_2 = h \}} f_1(g, h_1) f_2(g, h_2).
  \na
 For  an abelian group $G$,  $*_2$  is the convolution product  on $\cC_G(\tilde G) $ 
  in the  second  variable
  \[
 (f_1*_2  f_2 )(g, h) = \sum_{\{ h_1, h_2 \in   G:   h_1h_2 = h \}} f_1(g, h_1) f_2(g, h_2).
\]
  \end{enumerate}
A  simple example like $G= \Z_2$ shows that these three products are indeed different. 
  
 \end{example}

\begin{corollary}   For any finite group $G$, there is a ring isomorphism  
\[
(H_{CR}([G/G], \C), *_{CR}) \cong  (\cC_G(\tilde G), *_2).
\]
\end{corollary}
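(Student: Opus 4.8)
The plan is to obtain the corollary by comparing the two ring isomorphisms that both have $\big(K^*_{orb}([G/G])\otimes\C,\circ\big)$ as their source: the one furnished by Theorem \ref{ring:iso} and the one computed in the preceding Example. First I would record that the orbifold $\XX=[G/G]$, presented by the conjugation action groupoid $G\times G\rightrightarrows G$, is compact and zero-dimensional, hence (vacuously) almost complex and presentable. Because it is zero-dimensional, so are its inertia orbifold $\tilde\XX$ and its double sector $\tilde\XX^{[2]}$; therefore the normal bundle $\cN_e$ of $e\colon\tilde\XX\to\XX$ is the zero bundle, every degree shifting number $\iota_{(g)}$ vanishes, and by Theorem \ref{theorem:2} the obstruction bundle $E^{[2]}$ is zero. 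I would conclude that $H^*_{CR}([G/G],\C)=H^0(\tilde\XX,\C)=\cC_G(\tilde G)$ as a graded vector space, concentrated in degree $0$ with no shift, that $\T(\cN_e,\Phi)=1$, and hence that the modified delocalized Chern character agrees with the ordinary one, $\widetilde{ch}_{deloc}=ch_{deloc}$, on $K^*_{orb}([G/G])\otimes\C$.

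Granting this, the corollary is essentially immediate. By Theorem \ref{ring:iso} applied to $\XX=[G/G]$, the map $\widetilde{ch}_{deloc}$ carries the stringy product $\circ$ of Definition \ref{stringy:orb} to the Chen-Ruan product $*_{CR}$, giving a ring isomorphism $\big(K^*_{orb}([G/G])\otimes\C,\circ\big)\cong\big(\cC_G(\tilde G),*_{CR}\big)$. On the other hand, the third part of the preceding Example exhibits the same delocalized Chern character as a ring isomorphism $\big(K^*_{orb}([G/G])\otimes\C,\circ\big)\cong\big(\cC_G(\tilde G),*_2\big)$, with $*_2$ the product (\ref{hidden:product}). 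Since these are the same linear map between the same pair of vector spaces, the products on the common target must coincide: $*_{CR}=*_2$ on $\cC_G(\tilde G)$. Equivalently, composing the inverse of the first isomorphism with the second produces the desired ring isomorphism $\big(H_{CR}([G/G],\C),*_{CR}\big)\cong\big(\cC_G(\tilde G),*_2\big)$.

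The one subtlety — and the step I expect to need the most care — is that Theorem \ref{ring:iso} is stated for \emph{effective} orbifolds, whereas $[G/G]$ is not effective (for $g=1$ the local group $Z_G(1)=G$ acts trivially on the $0$-dimensional chart). I would dispense with this by observing that effectiveness plays no role in the proof of Theorem \ref{ring:iso}: it rests on Proposition \ref{deloc:iso}, which is stated for arbitrary compact presentable orbifolds, on Theorem \ref{theorem:2}, on the Adem-Ruan-Zhang product $\bullet_{ARZ}$, and on the construction of $\circ$ in Definition \ref{stringy:orb}, all of which require only that $\XX$ be compact, almost complex and presentable — properties that $[G/G]$ enjoys. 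The remainder of the argument is the bookkeeping, summarized in the first paragraph, verifying that the zero-dimensionality of $[G/G]$ forces all of the Chen-Ruan refinements (the Thom and fractional Thom classes, the Euler class of $E^{[2]}$, and the degree shift) to trivialize.
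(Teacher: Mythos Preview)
Your proposal is correct and follows exactly the route the paper intends: the corollary is stated without proof because it is meant to follow immediately by composing the ring isomorphism of Theorem \ref{ring:iso} (which in the zero-dimensional case reduces to the ordinary $ch_{deloc}$) with the computation in item (3) of the preceding Example. Your explicit handling of the effectiveness hypothesis is a welcome addition, since the paper applies Theorem \ref{ring:iso} to the non-effective orbifold $[G/G]$ without comment.
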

 \begin{example}  (Weighted projective spaces)
Consider the weighted projective space $W\PP (p,q)$, where $p$ and $q$ are coprime integers,
which can be presented   as the quotient of the unit sphere $S^3 \subset \C^2$ by  the $S^1$-action
\[
e^{i\theta} (z_1, z_2) = (e^{ip\theta}  z_1, e^{iq\theta} z_2).
\]
As an orbisphere, $W\PP (p,q)$ can be covered by two orbifold charts at singular
points $x=[1,0]$ and $y=[0,1]$ with   isotropy groups $\Z_{p}$ and $\Z_{q}$ respectively.
Here $\Z_{w_i}$ is the cyclic subgroup of $S^1$ generated   by the primitive $w_i$-th root of unity.

The orbifold K-theory of $W\PP (p,q)$ over  $\C$ was given in \cite{AR} as 
\[
K_{orb}^* \big( W\PP (p,q) , \C\big)  \cong \C [u]/\<(1-u)^2\> \oplus  \C (\zeta_p) \oplus  \C (\zeta_q)
\]
where $\C (\zeta_p)$ and $\C (\zeta_q)$ are the $p$-th and $q$-th cyclotomic fields over $\C$.


We can apply Theorem \ref{ring:iso},  Remark  \ref{simple:def}  and Example 4.28 in \cite{ALR}  to
 find the stringy product on $K_{orb}^* \big( W\PP (p,q), \C\big)$. Let 
 \[
 \a_p \in \C \oplus \C(\zeta_p) \qquad \b_q \in 
 \C \oplus \C(\zeta_q) 
 \]
  be elements such that  the  delocalized Chern characters correspond to the constant function 1 on the twisted sectors corresponding to generators of  $\Z_p$ and $\Z_q$ respectively. Then 
  we have
  \[
  \a_p^p = \b_q^q = 1-u,   \a_p^{p+1} = \b_q^{q+1} =0.
  \]

 \end{example}


 \section{Twisted cases}

As explained in Section \ref{section:ARZ}, the Adem-Ruan-Zhang stringy product  is defined for twisted K-theory of the inertia orbifold of a compact, almost complex orbifold.  In this section, we explain how the constructions in Section 4 can be carried over to torsion twisted cases. 

A twisting  $\sigma$ over an  orbifold $\XX$ is a principal $PU(H)$-bundle over $\XX$. Then there exists an orbifold atlas such that $\sigma$ is represented by an $U(1)$-central 
extension of the canonically associated groupoid $\cG[\cU] = (\cG_1
\rightrightarrows \cG_0).$   This 
central  extension 
\ba\label{gerbe}
\xymatrix{
U(1) \ar[r] &  \ccR \ar[r] & \cG_1
\rightrightarrows \cG_0,   }
\na
called a $U(1)$-gerbe over $(\cG_1 \rightrightarrows \cG_0)$, 
is obtained from the $PU(H)$-valued cocycle on $\cG[\cU] $ and the 
$U(1)$-central 
extension of  $PU(H)$. 
Note that a gerbe connection is a connection $\theta$ on the $U(1)$-bundle $\ccR$
which is compatible with the groupoid multiplication on $\ccR$.  A curving for the connection
$\theta$ is a 2-form $B$ on $\cG_0$ such that the curvature of $\theta$
\ba\label{gerbe:conn}
F_\theta = s^* B - t^*B \in \Omega^2(\cG_1).
\na
Then $dB \in \Omega^3(\cG_0)$ satisfies the property
\[
 s^* (d B)  - t^* (dB ) =0.
 \]
 Hence, the 3-form  $dB$ defines a closed 3-form $\Omega$ on the orbifold $\XX$, called the
 gerbe curvature of the gerbe $\ccR$ with a gerbe connection $\theta$ and a curving $B$. 
 A twisting $\sigma$  is called torsion if its associated gerbe is flat, that is,  it has a gerbe connection and a curving whose   gerbe curvature vanishes.
By a  standard construction (Proposition 3.6 in   \cite{TX}), taking a refined orbifold atlas
 if necessary,   the  gerbe  (\ref{gerbe})  always admits a gerbe connection and a curving.

The central extension (\ref{gerbe}) defines a complex line bundle  over $\cG_1$.  Restricted to
the inertia groupoid $\tilde \cG$,  we get   a complex line bundle over the inertia groupoid, denoted by
\ba\label{line:bundle}
\cL_\sigma = \sqcup_{(g)\in \cT_1}  \cL_{(g)} \longrightarrow  \sqcup_{(g)\in \cT_1}  \XX_{(g)}.
\na
When the gerbe is equipped with a gerbe connection and curving, the property (\ref{gerbe:conn}) implies that
the induced connection on the complex line bundle $\cL_\sigma$ is flat.  Hence, $\cL_\sigma =\{ \cL_{(g)}\}_{(g)\in \cT_1}$ is an inner local system defined in \cite{PRY} and \cite{Ruan4}.  Recall that an inner local system over an orbifold is a flat 
complex orbifold line bundle 
\[
\cL_\sigma= \sqcup_{(g)\in \cT_1}  \cL_{(g)} \longrightarrow  \sqcup_{(g)\in \cT_1}  \XX_{(g)}\]
satisfying the following compatibility  conditions
\begin{enumerate}
\item $\cL_{(1)}$  is a trivial orbifold line bundle with a fixed trivialization.
\item There is a non-degenerate pairing $\cL_{(g)} \otimes I^* \cL_{(g^{-1})} \to    \cL_{(1)}$.
\item There is an associative multiplication 
\[
e_{(g_1)}^* \cL_{(g_1)} \otimes   e_{(g_2)}^* \cL_{(g_2)}  \longrightarrow e_{(g_1g_2)}^* \cL_{(g_1g_2)}
\]
over $\XX_{(g_1, g_2)}$ for $(g_1, g_2)\in \cT_2$. Here $e_3 = I\circ e_{12}$. 
\end{enumerate}

Given an inner local system $\cL_\sigma$ on  an almost complex, compact orbifold $\XX$,  the $\cL_\sigma$-twisted Chen-Ruan cohomology  is defined
to be 
 \ba\label{CR:twisted}
 H_{CR}^*(\XX, \cL_\sigma) =   \bigoplus_{(g)\in \cT_1} H^{d-2\iota_{(g)}} (\XX_{(g)}, \cL_{(g)})
\na
with the twisted Chen-Ruan product given by the same expression as in Definition \ref{CR}.  The 
properties of the inner local system ensures that the Poincar\'e pairing 
\[
\< \omega_1, \omega_2 \>  =\int_{\XX_{(g)}}^{orb} \omega_1 \wedge I^*\omega 
\]
for $\omega_1 \in H^*(\XX_{(g)}, \cL_{(g)}) $ and $\omega_2 \in H^*(\XX_{(g^{-1})}, \cL_{(g^{-1})}) $ is non-degenerate, and the twisted Chen-Ruan product is determined by the 
 the 3-point function 
 \[
\< \omega_1 *_{CR} \omega_2 , \omega_3 \> = \int_{\XX_{(g_1, g_2)}} ^{orb} e_1^* \omega_1
\wedge e_2^* \omega_2 \wedge e_3^* \omega_3 \wedge e(E^{[2]}_{(g_1, g_1)} ) ,
\]
  for $\omega_1 \in H^*(\XX_{(g_1) }, \cL_{(g_1)})$, 
$\omega_2 \in H^*(\XX_{(g_2) }, \cL_{(g_2)})$ and $\omega_3 \in H^*(\XX_{(g_3) }, \cL_{(g_3)})$
with $g_3= (g_1g_2)^{-1}$.  

Let $\sigma$ be  a torsion twisting  on an almost complex compact orbifold $\XX$.   We assume that the twisted orbifold K-theory defined by the Grothendieck group of $\ccR$-twisted vector bundles over $\XX$ agrees with  the twisted K-theory defined in Section \ref{section:ARZ}.   Let $\check{\sigma}$ be the gerbe
associated to $\sigma$    with a gerbe connection and curving.  Note that if  the gerbe curvature of  $\check{\sigma}$ is zero, then the twisted Chern character on the twisted orbifold
K-theory $K_{orb}^*(\XX, \sigma)$ constructed in \cite{TX}   
\ba\label{twisted:chern}
ch_{\check{\sigma}}:    K_{orb}^*(\XX, \sigma) \otimes\C \longrightarrow H^*(\tilde \XX, \cL_\sigma) =   \bigoplus_{(g)\in \cT_1} H^{*} (\XX_{(g)}, \cL_{(g)})
 \na
 is an isomorphism of  vector spaces over the complex coefficients.  Geometrically, this  twisted Chern character is  constructed  as follows. Let $E$ be an $\ccR$-twisted vector bundle over $\cG_0$. Then the pull-back vector bundle  $e^* E $ over $\tilde \cG_0$ admits a bundle isomorphism 
 $\Phi$ defined by an element in $End(e^*E)\otimes \cL_\sigma$. With respect to a locally constant trivialization,
 the Chern character $ch_\Phi$ as in Section \ref{section:stringy}   can be extended to the twisted case to define a 
 homomorphism
 \[
 ch_{\Phi, \sigma}:   K_{orb}^*(\tilde\XX,  e^*\sigma) \longrightarrow  H^*(\tilde \XX, \cL_\sigma)
 \]
 such that the following diagram commutes
 \ba
  \xymatrix{
K^*_{orb} (\tilde \XX, e^*\sigma) \otimes \C   \ar[rr]^{ch_{\Phi, \sigma}}& 
 & H^* (\tilde \XX, \cL_\sigma). \\
K^*_{orb}  ( \XX, \sigma) \otimes \C  \ar[urr]_{ch_{ \check{\sigma}}  }\ar[u]^{e^*} &&}
 \na
 
 If  $e^*\sigma$ is transgressive,   then we can apply the Adem-Ruan-Zhang  product 
  on  $K_{orb}^*(\tilde \XX, e^*\sigma)$ to define a stringy product 
  on $K_{orb}^*(\XX, \sigma)\otimes\C$. Modifying the twisted Chern character in (\ref{twisted:chern})
  as in Definition \ref{modif:chern} and  Theorem \ref{ring:iso}, we expect that 
   $K^*_{orb}  ( \XX, \sigma) \otimes \C$  with  this  string product   is isomorphic to  the twisted Chen-Ruan cohomogy  $H^*(\tilde \XX, \cL_\sigma)$ using the Mayer-Vietoris exact sequence.   
   We leave the details of   the proof of this  isomorphism  to interested readers.

\vskip .2in
\noindent
{\bf Acknowledgments} The authors would like to thank Yongbin Ruan for many invaluable discussions during the course of this work. This work is partly supported the NSFC Grant  10825105 (Hu)  and  the ARC Discovery Grant   (Wang).   Hu thanks MSI of Australian National University and Wang thanks Sun Yat-Sen University for their hospitality during part of the writing of this paper.  The authors like to thank the referee for valuable suggestions to improve  the expositions and Alex Amenta for his comments.

  \end{document}